\documentclass{article}[9pt]
\usepackage{theorem}
\usepackage{amssymb}
\usepackage{float}
\usepackage{amsmath}
\usepackage{amsfonts}
\usepackage[american]{babel}
\usepackage{verbatim}
\usepackage{geometry}
\usepackage{lipsum}
\usepackage{color}
\usepackage{hyperref}

\numberwithin{equation}{section}

\newcommand{\R}        {\mathbb {R}}

\newcommand{\N}        {\mathbb {N}}

\newcommand{\eps}      {\epsilon}
\newcommand{\lap}      {\bigtriangleup}

\newcommand{\noi}      {\noindent}

\newcommand{\x}        {\rho}

\newtheorem{theorem}{Theorem}[section]

\newtheorem{lemma}{Lemma}[section]
\newtheorem{proposition}[theorem]{Proposition}

\newtheorem{definition}{Definition}[section]
{\theorembodyfont{\rmfamily}
\newtheorem{remark}[theorem]{Remark}
}
\newenvironment{proof}{\noindent\textbf{Proof.}}{\hfill$\square$\medskip}

\newcommand*\rn{\mathbb{R}^n}

\begin{document}

\title{On the Asymptotic Analysis of Problems Involving Fractional Laplacian in Cylindrical Domains Tending to Infinity}
\maketitle

\centerline{\scshape  Indranil Chowdhury$^1$ and Prosenjit Roy$^1$ }
\medskip
{\footnotesize

  \centerline{1 Tata Institute of Fundamental Research, Centre For Applicable Mathematics, Bangalore-560065, India.}
   \centerline{indranil@math.tifrbng.res.in, prosenjit@math.tifrbng.res.in}

} 

\vspace*{.5cm}

\begin{abstract}The article is an attempt to investigate the issues of asymptotic analysis for problems involving fractional Laplacian where the domains tend to become unbounded in one-direction. Motivated from the pioneering work on  second order elliptic problems by Chipot and Rougirel in \cite{CR}, where the force functions are considered on the cross section of domains, we prove the non-local counterpart of their result. 

Furthermore, recently in  \cite{Karen} Yeressian established a weighted estimate 
for solutions of nonlocal Dirichlet problems which exhibit the asymptotic 
behavior. The case when $s=1/2$ was also treated as an example to 
show  how the weighted estimate might be used to achieve the asymptotic behavior. In this article, we extend this result to each order between $0$ and $1$.
\end{abstract}

\let\thefootnote\relax\footnotetext{\small{{\emph{Keywords:} Asymptotic behavior of solutions, fractional Laplacian, integro-partial differential equation.}}}
\let\thefootnote\relax\footnotetext{\small{\emph{Mathematics Subject Classification 2010:} 35B40, 35R09}} 

\section{Introduction} \label{Introduction}

The non-local operators, in particular the fractional Laplace operators, have gained a great interest in recent day research; both for their interesting theoretical structures and for their wide range of applications. Motivated from the recent interest in this topic, we consider the non-local counterpart of the following second order elliptic problem :
\begin{equation}\label{Aux Problem}
  \left\{
 \begin{array}{ll}
 -\lap  v_\ell =  f_\ell \hspace{8.7mm}&\textrm{in} \ \Omega_\ell , \\
  v_\ell = 0   \hspace{16.9mm} &\textrm{on}  \ \partial \Omega_\ell, 
\end{array}
\right.
\end{equation}
where, $\Omega_{\ell}= (-\ell,\ell)\times \omega \subset\rn$ denotes a cylinder of length $\ell$ with the open bounded set $\omega \subset \R^{n-1}$ as the cross section and $v_{\ell}\in H_0^1(\Omega_{\ell})$ denotes the unique weak solution of \eqref{Aux Problem} for $f_{\ell} \in L^2(\Omega_{\ell})$. Asymptotic behavior of problems of type \eqref{Aux Problem}, as the length $\ell$ tends to infinity, have been studied with full generality in last two decades. We refer \cite{b}, \cite{CR}, \cite{karen}  as some of the relevant references in this topic. Also a large amount of literature is available on the asymptotic analysis for different types of elliptic, parabolic and hyperbolic problems involving different boundary conditions, for instance see \cite{d}, \cite{pro}, \cite{pr},  \cite{CS1}, \cite{CS2}, \cite{Sen1}, \cite{Sen2}. Our aim of this paper is to investigate the asymptotic behavior of the solution to the Dirichlet problem for the fractional Laplace operator. To be precise, let us consider the following problem : 
\begin{equation}\label{Main Problem}
  \left\{
 \begin{array}{ll}
 (-\lap )^s u_\ell =  f_\ell \hspace{8.7mm}&\textrm{in} \ \Omega_\ell , \\
  u_\ell = 0   \hspace{16.9mm} &\textrm{on}  \ \Omega_\ell^c. 
\end{array}
\right.
\end{equation}
Where, $\Omega_{\ell}$ is same as above definition and for a fixed $s \in (0,1)$ the fractional Laplace operator is defined as 
\begin{align}\label{fractional laplacian}
(-\lap)^{s}u(x)= C_{n,s} \ P.V \int_{\rn} \frac{u(x)- u(x+y)}{|y|^{n+2s}} \ dy .
\end{align} 
Here, $P.V$ denotes that the above integral has to be defined in principal value sense. The constant $C_{n,s}$ , which depends both on $n$ and $s$, will be defined explicitly in next section. The other way of defining the fractional Laplacian is also possible which has also been discussed in the next section.\\

 To the best of our knowledge, only result available on asymptotic analysis for non-local elliptic problems is due to Yeressian (see, \cite{Karen}).  In \cite{Karen} the author has obtained weighted estimates for solutions of 
Dirichlet problems for a class of non-local operators which are the generators of the semi-group created by symmetric pure jump Levy processes. 
Fractional Laplacians are generators of such processes. 
But in \cite{Karen} the weighted estimates has been used only to show 
the asymptotic behavior for the case $s=1/2$ (see, Lemma 4 of \cite{Karen}) . We can formulate this asymptotic behavior result as follows :
\begin{theorem} \label{Karen}
Let $u_{\ell}$ be the weak solution of the equation \eqref{Main Problem} for $s = \frac{1}{2}$ with the condition that 
\begin{align}\label{force term condn}
support(f_{\ell}) \subset \Omega_{\ell} \setminus \Omega_{\ell -1} \quad and \quad ||f_{\ell}||_{L^2(\Omega_{\ell})} \leq K,
\end{align}
where $K>0$ is a constant independent of $\ell$. Then, as $\ell \rightarrow \infty$ we have 
\begin{align*}
\int_{\Omega_{1}} u_{\ell}^2 \longrightarrow \ 0. 
\end{align*} 
\end{theorem}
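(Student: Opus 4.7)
The plan is to derive the conclusion from the weighted $L^{2}$ inequality proved by Yeressian in \cite{Karen}, instantiated at $s=1/2$. That inequality, applied to the solution of \eqref{Main Problem}, furnishes an estimate of the shape
\begin{equation*}
\int_{\Omega_\ell} w(x)\, u_\ell^{2}(x)\, dx \;\leq\; C \int_{\Omega_\ell} w(x)\, f_\ell^{2}(x)\, dx
\end{equation*}
valid for a class of admissible weights $w$, with a constant $C$ that can be made independent of $\ell$. The key point is that we have the freedom to choose $w$, and by choosing $w$ large where we want control of $u_\ell$ and small on the support of $f_\ell$, the support hypothesis \eqref{force term condn} turns into quantitative decay.

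First, I would select a radial-in-$x_1$ weight that depends only on the axial coordinate, for instance $w(x) = (1+x_1^{2})^{-\alpha}$ for some $\alpha>0$ admissible for the weighted inequality at order $s=1/2$. On $\Omega_{1}$ we have $|x_{1}|\leq 1$, hence $w(x)\geq 2^{-\alpha}=:c_\alpha>0$; on $\text{supp}(f_\ell)\subset\Omega_\ell\setminus\Omega_{\ell-1}$ we have $|x_1|\geq \ell-1$, hence $w(x)\leq (1+(\ell-1)^{2})^{-\alpha}$. Plugging these two bounds into the weighted estimate yields
\begin{equation*}
c_\alpha \int_{\Omega_{1}} u_\ell^{2}\, dx \;\leq\; \int_{\Omega_\ell} w\, u_\ell^{2}\, dx \;\leq\; C\,(1+(\ell-1)^{2})^{-\alpha}\, \|f_\ell\|_{L^{2}(\Omega_\ell)}^{2} \;\leq\; C\,K^{2}\,(1+(\ell-1)^{2})^{-\alpha},
\end{equation*}
which tends to zero as $\ell\to\infty$, delivering the conclusion.

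The main obstacle, and the step that demands care, is verifying that the weight $(1+x_1^{2})^{-\alpha}$ is genuinely admissible in Yeressian's inequality and that the resulting constant $C$ does not deteriorate as the domain $\Omega_\ell$ expands. This reduces to checking (i) that the weight satisfies the Muckenhoupt-type condition required by the non-local energy identity used in \cite{Karen}, uniformly in the transversal variables, and (ii) that the admissibility range of $\alpha$ is nonempty for $s=1/2$. A related subtlety is that the exterior Dirichlet condition $u_\ell=0$ on $\Omega_\ell^{c}$ must be respected when integrating against a weight that is not compactly supported; this is handled by extending $u_\ell$ by zero to $\mathbb{R}^{n}$ and observing that the weight controls both the double-integral fractional seminorm and the $L^{2}$ term uniformly. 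Once these technical verifications are in place, the decay follows automatically from the chain of inequalities above, with any $\alpha$ in the admissible range providing polynomial-rate convergence.
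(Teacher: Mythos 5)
The paper does not actually give its own proof of this theorem: Theorem \ref{Karen} is imported verbatim from Yeressian (Lemma 4 of \cite{Karen}), and the paper's original contribution is the generalization in Theorem \ref{fd}, covering every $s\in(0,1)$, whose proof machinery is Section \ref{ss}. Your proposal is, in outline, exactly the strategy of Section \ref{ss}: pick a weight depending only on $x_1$ with polynomial decay, feed it into Yeressian's weighted $L^2$ estimate (Proposition \ref{KAREN1}), and harvest decay from the separation of $\Omega_1$ and $\mathrm{supp}\, f_\ell$. The structure of your final chain of inequalities matches the paper's display just before the end of Section \ref{ss}.

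However, as written the proposal has genuine gaps, all concentrated in the one sentence you acknowledge is ``the step that demands care.'' First, the admissibility condition in Yeressian's theorem is \emph{not} a Muckenhoupt-type condition; it is the pointwise nonlocal inequality $S_s(\rho)(X)\le C_0\gamma\,\rho(X)$ with $\gamma<\tfrac{1}{10}$ (equation \eqref{rho inequality} here), where $S_s(\rho)(X)=\int_{\R^n\setminus\{0\}}\bigl(\sqrt{\rho(X+Y)}-\sqrt{\rho(X)}\bigr)^2|Y|^{-n-2s}\,dY$. Verifying this for a concrete weight is the entire technical content of the argument (the paper's Lemmas \ref{sup}, \ref{grad} and Theorem \ref{S}), and the proposal defers it entirely. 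Second, and more seriously, your weight $w(x)=(1+x_1^2)^{-\alpha}$ carries no scaling parameter, so there is no mechanism to force the constant $\gamma$ below $\tfrac{1}{10}$. The paper's choice $\rho_{\eps,\lambda}(X)=\phi_\eps(x_1/\lambda)$ introduces precisely the dilation $\lambda$ needed: Theorem \ref{S} gives $S_s(\rho_{\eps,\lambda})\le (C_\eps/\lambda^{2s})\rho_{\eps,\lambda}$, and then $\lambda$ is taken large so the hypothesis of Proposition \ref{KAREN1} is met uniformly in $\ell$. Without an analogous scaling your weight cannot be made to satisfy the $\gamma<\tfrac{1}{10}$ requirement. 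Third, you do not identify the admissibility constraint on the exponent: the tail exponent of the weight (here $2\alpha$) must be strictly less than $2s=1$, i.e.\ $\alpha<\tfrac12$, because the integral $\int_1^\infty |\tau|^{-1-2s+\eps}\,d\tau$ appearing in the estimate of $S_s$ diverges otherwise. These are not cosmetic omissions; they are the parts of the proof that are actually nontrivial.

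Finally, a minor remark on the claimed rate: the paper notes that Yeressian's own proof for $s=\tfrac12$ achieves a better convergence rate than the one produced by this weighted-decay scheme. Your sketch would give at best the weaker rate $(\ell-1)^{-2\alpha}$ with $2\alpha<1$, consistent with Theorem \ref{fd} but not recovering Yeressian's sharper estimate.
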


In this article, we deal with two kinds of problems. In the first part, we consider a completely different condition  than \eqref{force term condn} on the force term $f_{\ell}$. In this case, similar to the result by Chipot and Rougirel in \cite{CR}, the force term $f_{\ell}$ would be defined only on $\omega \subset \R^{n-1}$ which assures us that $f_{\ell}$ would be independent of $\ell$. To be precise, we denote $X \in \rn$ by $(x_1, X_2) \in \R \times \R^{n-1}$ and we will study the following problem 
\begin{equation}\label{Actual  Problem}
  \left\{
 \begin{array}{ll}
 (-\lap )^s u_\ell =  f(X_2) \hspace{8.7mm}&\textrm{in} \ \Omega_{\ell} , \\
  u_\ell = 0   \hspace{16.9mm} &\textrm{on}  \ \Omega_{\ell}^c, 
\end{array}
\right.
\end{equation}
 where, $f \in L^2(\omega).$\smallskip
 
 In the case of classical Laplacian, Chipot \textit{et al} in \cite{CR} and \cite{Karen} have showed that the weak solution $v_{\ell}$ of equation \eqref{Aux Problem} with $f_{\ell} = f(X_2)$ will converge to $v_{\infty}$ in $H^1(\Omega_{\alpha \ell})$ for any $\alpha \in (0,1)$ where $v_{\infty} \in H_0^1(\omega)$ is the unique weak solution of the following problem: 
\begin{align*}
- \lap v_{\infty}(X_2) &= f(X_2) \qquad \mbox{in} \quad \omega, \\
 v_{\infty}(X_2) & = 0 \qquad \qquad \mbox{on}  \quad \partial \omega. 
\end{align*} 
 We expect a similar convergence result for fractional order Laplacian case also, i.e, the asymptotic behavior of the solution $u_{\ell}$ to the problem \eqref{Actual Problem} should depend on the problem \eqref{Limiting  Problem} which sets on the cross section $\omega$ of the cylinder. In particular, we obtain the following theorem which is one of the main result in our article. Let $(-\lap')^s$ be denoted as fractional Laplace operator in $n-1$ dimension. 

\begin{theorem}\label{main th}
Let us assume $s \in (\frac{1}{2}, 1)$, $f(X_2) \in L^2(\omega)$. Let $u_{\ell}$  be the unique weak solution of equation \eqref{Actual Problem} for each $\ell$ and $u_\infty$ be the unique weak solution of the following equation on the cross section $\omega$ of the cylinder $\Omega_\ell$,
\begin{equation}\label{Limiting  Problem}
  \left\{
 \begin{array}{ll}
 (-\lap' )^s u_\infty =  f(X_2) \hspace{8.7mm}&\textrm{in} \ \omega , \\
  u_\infty = 0   \hspace{16.9mm} &\textrm{on}  \ \omega^c. 
\end{array}
\right.
\end{equation}
Then for each $\alpha \in(0,1)$ as  $\ell \rightarrow \infty$, we have $$\int_{\Omega_{\alpha\ell}} |u_\ell - u_\infty|^2 \rightarrow 0 . $$ 
\end{theorem}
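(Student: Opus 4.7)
My plan is to adapt the Chipot--Rougirel strategy for the local case, using a Caccioppoli-type identity for the Gagliardo bilinear form in place of integration by parts. The first task is to recast the limiting problem in $n$ dimensions: extend $u_\infty$ to $\tilde u_\infty\colon\rn\to\R$ by $\tilde u_\infty(x_1,X_2) := u_\infty(X_2)$, so that $\tilde u_\infty$ already vanishes on $\R\times\omega^c$. Integrating out the extra $y_1$ variable in the $n$-dimensional kernel (using $\int_{\R} dt/(t^2+r^2)^{(n+2s)/2} = c_{n,s}\,r^{-(n-1+2s)}$) reduces $(-\lap)^s$ applied to a function depending only on $X_2$ to $(-\lap')^s$ applied to the cross section, with constants matching via the Fourier normalization of $C_{n,s}$. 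Hence $(-\lap)^s \tilde u_\infty(x) = f(X_2)$ pointwise on $\R\times\omega$. Combining this with the weak formulation of \eqref{Actual  Problem} yields the orthogonality
\[
\mathcal{E}(u_\ell - \tilde u_\infty, \varphi) = 0 \quad \text{for every admissible }\varphi,
\]
where $\mathcal{E}(u,v) := (C_{n,s}/2)\iint (u(x)-u(y))(v(x)-v(y))/|x-y|^{n+2s}\,dx\,dy$ and ``admissible'' means $\varphi\in H^s(\rn)$ vanishing outside $\Omega_\ell$.

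Next I pick a cutoff $\eta_\ell\in C^\infty_c(\R)$ with $\eta_\ell\equiv 1$ on $[-\alpha\ell,\alpha\ell]$, $\eta_\ell\equiv 0$ outside $[-\beta\ell,\beta\ell]$ for some $\alpha<\beta<1$, and $|\eta_\ell'|\lesssim 1/\ell$. Setting $w_\ell := u_\ell - \tilde u_\infty$, one checks $w_\ell\equiv 0$ on $\R\times\omega^c$, so $\varphi:=w_\ell\eta_\ell^2$ is admissible. The algebraic identity $(a-b)(ap^2-bq^2) = (ap-bq)^2 - ab(p-q)^2$, applied pointwise with $a=w_\ell(x), b=w_\ell(y), p=\eta_\ell(x_1), q=\eta_\ell(y_1)$, turns the orthogonality $\mathcal{E}(w_\ell, w_\ell\eta_\ell^2)=0$ into the Caccioppoli-type bound
\[
[w_\ell\eta_\ell]_{H^s(\rn)}^2 \leq \iint_{\rn\times\rn} \frac{w_\ell^2(x)\,(\eta_\ell(x_1)-\eta_\ell(y_1))^2}{|x-y|^{n+2s}}\,dx\,dy.
\]

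To estimate the right-hand side I integrate out $Y_2$ via the same constant trick, reducing the inner kernel to the one-dimensional Gagliardo kernel in $x_1$. A near/far split using $|\eta_\ell'|\lesssim 1/\ell$ yields the uniform bound $K(x_1):=\int_{\R}(\eta_\ell(x_1)-\eta_\ell(y_1))^2/|x_1-y_1|^{1+2s}\,dy_1 \lesssim \ell^{-2s}$ for every $x_1\in\R$, together with the sharper tail decay $K(x_1)\lesssim \ell\,|x_1|^{-1-2s}$ for $|x_1|\gg\ell$. Combining this with (i) the uniform growth bound $\|u_\ell\|_{L^2(\Omega_\ell)}^2\lesssim\ell$, which follows from $\mathcal{E}(u_\ell,u_\ell)=\int_{\Omega_\ell}f\,u_\ell$, Cauchy--Schwarz, and a slicewise fractional Poincar\'e inequality on the bounded cross section $\omega$; and (ii) the explicit identity $w_\ell(x_1,X_2) = -u_\infty(X_2)$ for $|x_1|>\ell$, the right-hand side of the Caccioppoli bound is controlled by $C\ell^{1-2s}$. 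Since $w_\ell\eta_\ell \in H^s_0(\R\times\omega)$ and $\omega$ is bounded, the fractional Poincar\'e inequality on the infinite strip (proved by slicewise Poincar\'e combined with the $\xi$-domination $|\xi'|^{2s}\leq|\xi|^{2s}$) yields
\[
\int_{\Omega_{\alpha\ell}} |u_\ell-u_\infty|^2 \leq \|w_\ell\eta_\ell\|_{L^2(\rn)}^2 \leq C\,[w_\ell\eta_\ell]_{H^s(\rn)}^2 \lesssim \ell^{1-2s},
\]
which tends to zero precisely when $s>1/2$.

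The main obstacle is the balance in the previous paragraph: the transition collar of $\eta_\ell$ has length $O(\ell)$, the energy norm of $w_\ell$ grows like $\sqrt{\ell}$, and the 1-d reduced kernel decays only like $\ell^{-2s}$; the product $\ell^{1-2s}$ vanishes if and only if $2s>1$, which is exactly why the theorem imposes $s\in(1/2,1)$. A secondary technical point, resolvable but needing care, is to justify $\mathcal{E}(\tilde u_\infty,\varphi)$ as a well-defined pairing for compactly supported $\varphi$ despite $\tilde u_\infty\notin H^s(\rn)$; the symmetric factor $\varphi(x)-\varphi(y)$ localizes the difference quotient of $\tilde u_\infty$ to a neighbourhood of the support of $\varphi$, restoring integrability in the near-diagonal regime.
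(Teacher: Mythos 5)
Your proposal follows essentially the same strategy as the paper's proof: extend $u_\infty$ to a cylindrical function $\tilde u_\infty$, reduce to the weak formulation of the extended problem via the slicewise kernel identity $C_{n,s}\Theta_n=C_{n-1,s}$, test the resulting homogeneous equation for $u_\ell-\tilde u_\infty$ against a cutoff-weighted version of itself to get a Caccioppoli-type inequality, bound the one-dimensional reduced kernel by $\ell^{-2s}$ on the bulk with a matching $\ell\,|y_1|^{-1-2s}$ tail, and close with the fractional Poincar\'e inequality and the a priori growth bound $\|u_\ell\|_{L^2(\Omega_\ell)}^2\lesssim\ell$, all yielding the rate $\ell^{1-2s}$. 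The only cosmetic differences are that you use the cleaner algebraic identity $(a-b)(ap^2-bq^2)=(ap-bq)^2-ab(p-q)^2$ (the paper instead expands and absorbs a cross term via Young's inequality) and that you describe $(-\lap)^s\tilde u_\infty=f$ as holding pointwise, where for $f\in L^2(\omega)$ only the weak (tested) version is available --- but your argument only uses the weak version, and you correctly flag the genuine technical point that $\tilde u_\infty\notin H^s(\rn)$, which the paper resolves by verifying $\tilde u_\infty\in V(\Omega_\ell)$.
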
\smallskip

In the second part of this article, we reconsider the equations of type \eqref{Main Problem}  when $f_{\ell}$ satisfies \eqref{force term condn}. Our second main result of this article extends Theorem \ref{Karen} to every values of $s \in (0,1)$. In particular, we prove the following theorem :  
\begin{theorem}
\label{fd}
Assume $f_{\ell}$ satisfies the condition \eqref{force term condn} and let $u_{\ell}$ be the weak solution of the problem \eqref{Main Problem}. Then for any $\delta > 0$,  as $\ell \rightarrow \infty$ one has 
$$\int_{\Omega_1} u_\ell^2 \longrightarrow 0. $$

\end{theorem}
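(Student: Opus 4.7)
The strategy is to apply Yeressian's weighted estimate from \cite{Karen}, which is valid for the full range $s \in (0,1)$ but was used in \cite{Karen} only to deduce the asymptotic behavior in the specific case $s = 1/2$. Since $\mathrm{supp}(f_\ell) \subset \Omega_\ell \setminus \Omega_{\ell-1}$ lies at distance at least $\ell - 2$ from $\Omega_1$, one expects to be able to choose a weight that is small on $\mathrm{supp}(f_\ell)$ and large on $\Omega_1$; the weighted $L^2$ control then transfers smallness of $f_\ell$ (in the weighted norm) into smallness of $u_\ell$ on $\Omega_1$.

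Concretely, I would use a weight of the form $\rho_\ell(x) = \varphi_\ell(x_1)$, where $\varphi_\ell \colon \R \to [1, \infty)$ is symmetric and peaked at $x_1 = 0$, with $\varphi_\ell(x_1) \approx 1$ for $|x_1| \geq \ell - 1$ and $\varphi_\ell(0) \approx \ell^{\beta_s}$ for some positive exponent $\beta_s = \beta_s(s) > 0$. Once one verifies that such a weight is admissible in the framework of \cite{Karen}, the weighted estimate reads
$$\int_{\Omega_\ell} \rho_\ell\, u_\ell^2 \,dx \ \leq \ C(s)\int_{\Omega_\ell} \rho_\ell\, f_\ell^2\,dx \ \leq \ C(s)\,\|f_\ell\|_{L^2(\Omega_\ell)}^2 \ \leq \ C(s)\, K^2,$$
where the middle inequality uses $\rho_\ell \approx 1$ on $\mathrm{supp}(f_\ell)$. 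Since $\rho_\ell \geq c\,\ell^{\beta_s}$ on $\Omega_1$, this yields
$$\int_{\Omega_1} u_\ell^2 \ \leq \ \frac{C(s)\,K^2}{c\,\ell^{\beta_s}} \ \longrightarrow \ 0 \quad \text{as } \ell \to \infty.$$

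The main obstacle is verifying that the weighted estimate of \cite{Karen} indeed admits a weight with this peaked shape and a strictly positive exponent $\beta_s$ for every $s \in (0,1)$. This reduces to controlling the nonlocal commutator
$$\iint \frac{u_\ell(x)\,u_\ell(y)\,\bigl(\rho_\ell^{1/2}(x) - \rho_\ell^{1/2}(y)\bigr)^2}{|x-y|^{n+2s}}\,dx\,dy,$$
which plays the role of the ``gradient of the cutoff squared'' error term appearing when one localizes the classical Laplacian. For general $s$, the competition between the kernel $|x-y|^{-n-2s}$ and the oscillation of $\rho_\ell$ constrains the admissible growth rate of the weight; the technical heart of the proof is to exhibit, for each $s \in (0,1)$, an explicit $\beta_s > 0$ for which this commutator can be absorbed into the fractional Dirichlet form of $\rho_\ell^{1/2} u_\ell$, thereby closing the weighted estimate and completing the argument for arbitrary $s$.
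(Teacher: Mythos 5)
Your strategy is the right one, and it is exactly the strategy the paper follows: cast the problem in Yeressian's weighted-estimate framework (Proposition \ref{KAREN1}) with a one-dimensional, power-type weight in the $x_1$ variable, chosen so that it is bounded below on $\Omega_1$ but small on $\Omega_\ell\setminus\Omega_{\ell-1}$, and then compare the two sides of the weighted inequality. But the proposal, as written, leaves the technical heart unfilled: you acknowledge that one must ``exhibit, for each $s\in(0,1)$, an explicit $\beta_s>0$'' for which the commutator can be absorbed, and then you do not do it. That verification is precisely where all the content of the paper's Section \ref{ss} lives: the explicit construction $\rho_{\eps,\lambda}(X)=\phi_\eps(x_1/\lambda)$ with $\phi_\eps(t)=\min\{1/2,(1+|t|^\eps)^{-1}\}$, and the estimates (Theorem \ref{S} together with Lemmas \ref{sup} and \ref{grad}) showing $S_s(\rho_{\eps,\lambda})\leq(C_\eps/\lambda^{2s})\rho_{\eps,\lambda}$ for every $\eps<2s$, so that taking $\lambda$ large makes the prefactor as small as needed. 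Without these estimates the argument is a plan, not a proof, and the constraint $\eps<2s$ (equivalently, the admissible range of your $\beta_s$) is exactly the point that makes the result go through for every $s\in(0,1)$; it is not obvious in advance and has to be derived.

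A secondary, more cosmetic remark: you propose an $\ell$-dependent weight, peaked at the origin with $\varphi_\ell(0)\approx\ell^{\beta_s}$ and $\varphi_\ell\approx 1$ for $|x_1|\geq\ell-1$. The paper instead uses a single $\ell$-independent weight $\rho_{\eps,\lambda}$. Since the admissibility condition $S_s(\rho)\leq C_0\gamma\rho$ is invariant under multiplying $\rho$ by a positive constant (both sides scale linearly), your weight is essentially $\ell^{\beta_s}$ times the paper's, so the two are interchangeable in principle. However, the paper's $\ell$-independent choice is cleaner: one verifies admissibility once, for a fixed $(\eps,\lambda)$, and then applies Proposition \ref{KAREN1} for all $\ell$ simultaneously, reading off the decay rate $\ell^{-\eps}$ directly from the spatial decay of the fixed weight on $\Omega_\ell\setminus\Omega_{\ell-1}$; with your $\ell$-dependent weight one would have to check that the admissibility constant is uniform in $\ell$.
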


In the above theorem one can replace $\Omega_1$ with $\Omega_R$ for any given $R>0$. Here we would like to point out that the rate of convergence obtained while proving Theorem \ref{Karen} is better than the rate we obtained while proving Theorem \ref{fd} for the case of $s= \frac{1}{2}$. However, we are more interested in studying the convergence of $u_{\ell}$ in $L^2$-norm for each $s\in (0,1)$ rather than to get a better rate of convergence in particular. It makes Theorem \ref{fd} more valuable for our purpose. \\

One can deduce the convergence result for the solution of equation \eqref{Aux Problem} with $f_{\ell}= f(X_2)$ from the convergence result for the solution of equation \eqref{Aux Problem} with the condition \eqref{force term condn}. This can be done using a suitable transformation of the type \eqref{transformation}. Thus, in the case of classical Laplacian, the two problems are some what related. But, for the problem involving fractional Laplacian, the situation is completely different due to its non-local nature. Still, we can prove a connection between these two problems by considering the force function $f$ with better regularity. We will discuss this issue in details in the Appendix.\\

Rest of the article is organized as follows. In the next section we introduce some basic notations and definitions which would be required throughout the article. Section \ref{pre} is  devoted to some preliminary results that will be  required in the proof of Theorem \ref{main th}. We need to emphasize that $v_{\infty}$ can be extended to $\Omega_{\ell}$ for each $\ell >0$ trivially in the classical Laplace case and we can easily see that $\lap v_{\infty}$ is well defined in $\Omega_{\ell} \subset \rn$. However, in the non-local setting one has to justify the well-definedness properly and this issue has also been considered in Section \ref{pre}.  
In Section \ref{mao} we present the proof of Theorem \ref{main th}. In Section \ref{ss} we analyze the case of article \cite{Karen} where the force function $f_{\ell}$ has chosen to be supported inside $\Omega_{\ell} \setminus \Omega_{\ell -1}$ and we extend Theorem \ref{Karen} to all $s \in (0,1)$. The paper concludes with an Appendix where we make a connection between these two problems with some extra assumptions. 
 
\vspace{0.5cm}

\section{Notations  and Definitions}
In this section we introduce various notations and definitions of spaces that would be used through out the paper.
\smallskip

Unless and otherwise mentioned, $X,Y,Z$ will denote points in $\rn$, $x_1,y_1,z_1$ will denote points in $\R$ and $X_2,Y_2,Z_2$ will denote points on $\R^{n-1}$. It is understood that, through out this paper functions are  extended by zero, if not explicitly mentioned. $B_R$ will denote ball of radius $R$ with center at $0$. We use letters $C,K$ etc to denote various generic constants which may change from line to line.   \smallskip
 
 The $L^{\infty}$-norm in a set $U\subset \rn$ will be denoted by $||.||_{L^{\infty}(U)}$. Similarly the $L^2$-norm in $U$ will be denoted by $||.||_{L^2(U)}$. The H\"older space $C^{k,\alpha}(U)$ with $k \in \N$, $\alpha \in (0,1]$ is defined as the subspace of $C^k(U)$ consisting of functions whose $k$ - th order partial derivatives are uniformly H\"older continuous with exponent $\alpha$ and the norm will be defined as 
$$||u||_{C^{k,\alpha}(U)} = \sum _{l=0}^k ||D^l u||_{L^{\infty}(U)} + \sup_{X,Y \in U} \frac{|D^k u(X) - D^ku(Y)|}{|X-Y|^{\alpha}}. $$ 

 For $s\in (0,1)$, the $n$ and $n-1$ dimensional  fractional Laplacian operators will be  denoted by $(-\lap)^s$ and $(-\lap')^s$ respectively.  We have already defined  the fractional Laplacian in \eqref{fractional laplacian}. We can also write the integral in \eqref{fractional laplacian} as weighted second order differential quotient, provided it is well defined. For any $s\in (0,1)$ and $X \in \rn$ we write
 \begin{align}\label{fractional laplacian 1}
 (- \lap)^s u(X) = C_{n,s} \int_{\rn} \frac{2u(X)- u (X+Y) -u (X-Y)}{|Y|^{n+2s}} \ dY.
 \end{align}
 The novelty of this representation is that the above integral does not involve the singularity at origin. We refer to \cite{NGV} for the equivalence of two definitions \eqref{fractional laplacian} and \eqref{fractional laplacian 1}. The space $H^s(\R^n)$ is defined as the space of all functions $u \in L^2(\R^n)$, such that the map 
$$(X,Y) \mapsto  \frac{u(X) -u(Y)}{|X-Y|^{\frac{n}{2}+s}}$$
belongs to $L^2(\R^n\times \R^n)$. It is well known (see, \cite{kassman}) that $H^s(\R^n)$ is a Hilbert space endowed with the norm
\begin{equation} \label{norm}
||u||_{H^s(\R^n)} = \int_{\R^n}|u(X)|^2dX + \int_{\R^n\times \R^n}  \frac{|u(X) -u(Y)|^2}{|X-Y|^{n+2s}}dXdY.
\end{equation}
The second integral of \eqref{norm} is called Gagliardo semi-norm for $H^s(\R^n)$ and is denoted by  $[.]_{H^s(\R^n)}$.  Let $\Omega$ be any open bounded subset of $\R^n$. We define the space $H_\Omega^s(\R^n)$, endowed with the norm $||.||_{H^s(\R^n)}$, as
 
$$H_\Omega^s(\R^n) := \left\{ u \in H^s(\R^n) \ \big|  \ u = 0  \ \textrm{on} \ \Omega^c \right\}.$$
Also by $V(\Omega)$, we denote the space of all functions from $\R^n$ to $\R$, such that  $u\big|_\Omega \in L^2(\Omega)$ and the map 
$$(X,Y) \mapsto  \frac{u(X) -u(Y)}{|X-Y|^{\frac{n}{2}+s}}$$
belongs to $L^2(\Omega\times \R^n)$.  $C_c^\infty(\Omega)$ will denote the set of compactly supported smooth functions in $\Omega$. \smallskip

We define $\varepsilon_{\Omega} : H_\Omega^s(\R^n) \times H_\Omega^s(\R^n) \rightarrow \R$, the bi-linear form, as 
\begin{multline*}
\varepsilon_\Omega(u,v) := C_{n,s}
\int_{\R^n}\int_{\R^n} \frac{\left\{u(X)-u(Y)\right\} v(X)}{|X-Y|^{n+2s}}dXdY  \\= \frac{C_{n,s}}{2} \int_{\R^n}\int_{\R^n} 
\frac{\left\{u(X)-u(Y)\right\}\{ v(X) - v(Y)\}}{|X-Y|^{n+2s}}dXdY,
\end{multline*}
where $C_{n,s }$ can be written down explicitly as 
\begin{align}\label{constant}
C{n,s} = \frac{s2^s \Gamma\left(\frac{n+2s}{2} \right)}{\pi^{\frac{n}{2}} \Gamma(1-s)}
\end{align} and $\Gamma$ denotes the usual Gamma function. Detailed discussion on the constant $C_{n,s}$ can be found in \cite{NGV}.
We refer to \cite{kassman} for the the proof of the second equality in the definition of $\varepsilon_\Omega$ above. We now define the notion of weak solution to the problem \eqref{Main Problem}.

\begin{definition}
Let $\Omega_\ell$ be the open bounded set defined as above and $f_{\ell} \in L^2(\Omega_{\ell})$. Then, $u_{\ell} \in H_{\Omega_{\ell}}^s(\R^n)$ is a weak solution of the problem \eqref{Main Problem}, if it satisfies 
\begin{equation}
\label{main1}
\varepsilon_{\Omega_\ell}(u_\ell, \phi) = \int_{\Omega_\ell} f_{\ell} \phi,   \hspace{4mm} \forall \phi \in H_{\Omega_\ell}^s(\R^n). 
\end{equation}
\end{definition}

The weak solution of the problem \eqref{Limiting  Problem}  can be understood in similar way. Next we define the notion of weak solution for non-homogeneous boundary value problem of fractional Laplacian. 

\begin{definition}
Let $g \in V(\Omega)$ and $f' \in L^2(\Omega)$,  a function $u \in  V(\Omega)$ is called  a weak solution of 
\begin{equation*}\label{Main Problem 1}
  \left\{
 \begin{array}{ll}
 (-\lap )^s u =  f' \hspace{8.7mm}&\textrm{in} \ \Omega, \\
  u = g   \hspace{16.9mm} &\textrm{on}  \ \Omega^c, 
\end{array}
\right.
\end{equation*}
if $u-g \in H_\Omega^s(\R^n)$ and  
$$ \varepsilon_{\Omega}(u,\phi) = \int_{\Omega} f'\phi,   \hspace{4mm} \forall \phi \in H_{\Omega}^s(\R^n).$$
\end{definition}
 
Existence and uniqueness of the weak solutions for those problems are well studied and fairly well-known now a days. We refer to \cite{kassman} for further reference.

\section{Some Preliminary Results}\label{pre}

Before we start proving the preliminary results, required for the proof of Theorem \ref{main th}, one important fact that should be noticed is the involvement of the constant $C_{n,s}$(depends on
the dimension) in the weak formulation of the problem \eqref{Actual Problem} and which is not the case for Laplacian. Since the problem \eqref{Actual Problem} and \eqref{Limiting  Problem} deals
with fractional Laplacian in $n$ and $n-1$ dimensions respectively, necessarily there should be a connecting formula for the constants. Our next lemma provides the aforesaid connection.
\begin{lemma}\label{connection}
For each $n \in \N$ and $s\in (0,1)$, let $C_{n,s}$ be defined by \eqref{constant}. Then one has $C_{n,s} \Theta_n = C_{n-1,s}$, where 
$$\Theta_n =  \int_{\R} \frac{dz}{(1+z^2)^{\frac{n+2s}{2}}}.$$
\end{lemma}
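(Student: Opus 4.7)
The plan is to substitute the explicit formula \eqref{constant} into the claimed identity and reduce it to an evaluation of the one-dimensional integral $\Theta_n$. Writing out both constants,
\begin{equation*}
C_{n,s} = \frac{s2^s\,\Gamma\!\left(\frac{n+2s}{2}\right)}{\pi^{n/2}\,\Gamma(1-s)}, \qquad C_{n-1,s} = \frac{s2^s\,\Gamma\!\left(\frac{n-1+2s}{2}\right)}{\pi^{(n-1)/2}\,\Gamma(1-s)},
\end{equation*}
the factors $s2^s$ and $\Gamma(1-s)$ cancel in the ratio $C_{n-1,s}/C_{n,s}$. Thus the lemma is equivalent to showing
\begin{equation*}
\Theta_n = \sqrt{\pi}\,\frac{\Gamma\!\left(\frac{n-1+2s}{2}\right)}{\Gamma\!\left(\frac{n+2s}{2}\right)}.
\end{equation*}

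To establish this identity, I would use the trigonometric substitution $z = \tan\theta$, so that $dz = \sec^{2}\theta\,d\theta$ and $1+z^2 = \sec^2\theta$. Setting $a = (n+2s)/2$, this transforms $\Theta_n$ into
\begin{equation*}
\Theta_n = \int_{-\pi/2}^{\pi/2} \cos^{2a-2}\theta\,d\theta = 2\int_{0}^{\pi/2} \cos^{2a-2}\theta\,d\theta.
\end{equation*}
The last integral is a classical Beta-function expression, equal to $B\!\left(a-\tfrac{1}{2},\tfrac{1}{2}\right) = \Gamma(a-\tfrac{1}{2})\Gamma(\tfrac{1}{2})/\Gamma(a)$. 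Since $\Gamma(\tfrac{1}{2}) = \sqrt{\pi}$ and $a - \tfrac{1}{2} = (n-1+2s)/2$, we obtain exactly the ratio of Gamma functions required above, completing the proof.

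There is essentially no obstacle here; the statement is a purely computational identity between special functions. The only thing to be a little careful about is the condition $a > 1/2$ needed for the convergence of the Beta integral, i.e., $n + 2s > 1$, which holds for every $n \geq 1$ and $s \in (0,1)$, so the substitution is justified throughout the parameter range relevant to the paper.
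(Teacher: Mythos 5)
Your proof is correct and follows essentially the same route as the paper: the substitution $z=\tan\theta$ converts $\Theta_n$ into a Beta integral, which is then expressed via Gamma functions and compared against the ratio $C_{n-1,s}/C_{n,s}$. The only cosmetic difference is that you frame the calculation as verifying $\Theta_n = \sqrt{\pi}\,\Gamma\!\left(\tfrac{n-1+2s}{2}\right)/\Gamma\!\left(\tfrac{n+2s}{2}\right)$ after first cancelling common factors, whereas the paper computes $\Theta_n$ directly and then invokes the Beta--Gamma identity.
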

\begin{proof}
First note that $ \Theta_n = 2\int_0^\infty \frac{dz}{(1+ z^2)^{\frac{n+2s}{2}}}$. Changing the variable by $z= \tan \theta$, one obtains 
$$\Theta_n = 2\int_0^{\frac{\pi}{2}}(\cos \theta )^{n+2s-2}d\theta = B\left(\frac{1}{2},\frac{n+2s-1}{2}\right),$$
where,  $$B(x,y) = 2\int_0^{\frac{\pi}{2}}(\cos \theta )^{2x-1} (\sin \theta )^{2y-1} d\theta,$$
for $x, y \in (0, \infty)$. Using the formula $ B\left(\frac{1}{2},\frac{n+2s-1}{2}\right) = \frac{\Gamma(\frac{1}{2})\Gamma(\frac{n+2s-1}{2}) }{\Gamma(\frac{n+2s}{2})}$ one obtains the desired result.
\end{proof}

As we have discussed in the introduction, for the problem involving Laplace operator it is a straight forward calculation to claim that $v_{\infty}$ is the unique weak solution of the following problem : 
\begin{equation*}
  \left\{
 \begin{array}{ll}
 -\lap  v =  f(X_2) \hspace{8.7mm}&\textrm{in} \ \Omega_\ell, \\
  v= v_\infty(X_2)   \hspace{16.9mm} &\textrm{on}  \ \partial \Omega_\ell. 
\end{array}
\right.
\end{equation*}
But in the case of fractional Laplacian we have to establish that $u_{\infty}$ is a weak solution of \eqref{Main Problem 2}. 

\begin{lemma} \label{limiting lemma}
Let us define $u_\infty^*(x_1, X_2) := u_\infty(X_2)$. Then $u_\infty^*$  is the unique weak solution of the problem 
\begin{equation}\label{Main Problem 2}
  \left\{
 \begin{array}{ll}
 (-\lap )^s u =  f(X_2) \hspace{8.7mm}&\textrm{in} \ \Omega_\ell, \\
  u = u_\infty(X_2)   \hspace{16.9mm} &\textrm{on}  \ \Omega_\ell^c, 
\end{array}
\right.
\end{equation}
 where $u_\infty$ is the weak solution of  \eqref{Limiting  Problem}.
\end{lemma}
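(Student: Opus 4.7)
The plan is to verify three things: (i) $u_\infty^* \in V(\Omega_\ell)$ and the boundary data is attained in the sense of the definition, (ii) the bilinear form identity $\varepsilon_{\Omega_\ell}(u_\infty^*,\phi) = \int_{\Omega_\ell} f(X_2)\phi$ holds for all $\phi \in H_{\Omega_\ell}^s(\R^n)$, and (iii) uniqueness. The strategy at the heart of (ii) is to ``integrate out $y_1$'' in the double integral, reducing an $n$-dimensional bilinear form to an $(n-1)$-dimensional one, with the constants matching thanks to Lemma \ref{connection}.

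For (i), note that $u_\infty \in H_\omega^s(\R^{n-1})$. Since $u_\infty^*$ is constant in $x_1$ and $\Omega_\ell$ has bounded length $2\ell$, we get $\int_{\Omega_\ell}|u_\infty^*|^2 = 2\ell\int_\omega |u_\infty|^2 < \infty$. For the Gagliardo part over $\Omega_\ell \times \R^n$, write $Y=(y_1,Y_2)$ and perform the substitution $y_1 = x_1 + |X_2-Y_2|z$ to get
\begin{equation*}
\int_{\R}\frac{dy_1}{\bigl((x_1-y_1)^2+|X_2-Y_2|^2\bigr)^{(n+2s)/2}} = \frac{\Theta_n}{|X_2-Y_2|^{n-1+2s}}.
\end{equation*}
Using this, Fubini collapses the semi-norm integral to $2\ell\,\Theta_n\int_\omega\int_{\R^{n-1}}\frac{|u_\infty(X_2)-u_\infty(Y_2)|^2}{|X_2-Y_2|^{n-1+2s}}dY_2\,dX_2$, which is bounded by $2\ell\,\Theta_n\,[u_\infty]^2_{H^s(\R^{n-1})} < \infty$. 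Thus $u_\infty^* \in V(\Omega_\ell)$, and since the boundary datum $u_\infty(X_2)$ may itself be extended as $u_\infty^*$, the requirement $u_\infty^* - u_\infty^* = 0 \in H_{\Omega_\ell}^s(\R^n)$ is trivial.

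For (ii), fix first a smooth test function $\phi \in C_c^\infty(\Omega_\ell)$, which is dense in $H_{\Omega_\ell}^s(\R^n)$. Since $\phi$ is supported in $\Omega_\ell$,
\begin{equation*}
\varepsilon_{\Omega_\ell}(u_\infty^*,\phi) = C_{n,s}\int_{\Omega_\ell}\phi(X)\int_{\R^n}\frac{u_\infty(X_2)-u_\infty(Y_2)}{|X-Y|^{n+2s}}\,dY\,dX.
\end{equation*}
Applying the $y_1$-integration identity above and invoking Lemma \ref{connection} (which supplies $C_{n,s}\Theta_n = C_{n-1,s}$) turns the inner integral into the $(n-1)$-dimensional kernel against $C_{n-1,s}$. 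Define $\tilde\phi(X_2) := \int_{-\ell}^{\ell}\phi(x_1,X_2)\,dx_1$. For smooth $\phi$, $\tilde\phi \in C_c^\infty(\omega) \subset H_\omega^s(\R^{n-1})$. Fubini in the remaining $x_1$-integration then yields
\begin{equation*}
\varepsilon_{\Omega_\ell}(u_\infty^*,\phi) = C_{n-1,s}\int_{\R^{n-1}}\int_{\R^{n-1}}\frac{\{u_\infty(X_2)-u_\infty(Y_2)\}\,\tilde\phi(X_2)}{|X_2-Y_2|^{n-1+2s}}dY_2\,dX_2 = \varepsilon_\omega(u_\infty,\tilde\phi).
\end{equation*}
Since $u_\infty$ solves \eqref{Limiting Problem} in the weak sense, the right-hand side equals $\int_\omega f\,\tilde\phi = \int_{\Omega_\ell} f(X_2)\phi(X)\,dX$. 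The identity extends to all $\phi \in H_{\Omega_\ell}^s(\R^n)$ by continuity of both sides in the $H^s$-norm (noting that the linear map $\phi \mapsto \tilde\phi$ is bounded from $H_{\Omega_\ell}^s(\R^n)$ into $H_\omega^s(\R^{n-1})$ by the same Fubini-based estimate used in (i)).

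Uniqueness (iii) follows from the standard Lax--Milgram argument for the non-homogeneous Dirichlet problem: any two solutions differ by an element of $H_{\Omega_\ell}^s(\R^n)$ annihilating the coercive form $\varepsilon_{\Omega_\ell}$, hence agree. The main obstacle is the interchange of integrals in (ii), in particular justifying that the $y_1$-integration produces an absolutely convergent expression when combined with the outer variables; the identity $[u_\infty^*]^2 = 2\ell\,\Theta_n[u_\infty]^2_{H^s(\R^{n-1})}$ established in (i) gives exactly the integrability needed to apply Fubini throughout, so once (i) is in place the computation in (ii) goes through cleanly.
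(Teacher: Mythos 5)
Your proof is correct and follows essentially the same route as the paper: integrate out the $y_1$-variable to collapse the $n$-dimensional bilinear form to an $(n-1)$-dimensional one, invoke Lemma \ref{connection} to match $C_{n,s}\Theta_n$ with $C_{n-1,s}$, and apply the weak formulation of \eqref{Limiting  Problem}. Your refinement of collecting the $x_1$-integral into $\tilde\phi$ and arguing by density of $C_c^\infty(\Omega_\ell)$ is in fact cleaner than the paper's slice-by-slice use of the weak formulation (which implicitly treats $\phi(x_1,\cdot)$ as an admissible test function for a.e.\ $x_1$), though note that the boundedness of $\phi\mapsto\tilde\phi$ you invoke in the last step is superfluous: the density argument already goes through because both sides of the final identity $\varepsilon_{\Omega_\ell}(u_\infty^*,\phi)=\int_{\Omega_\ell}f\phi$ are continuous in $\phi$ in the $H^s$-norm.
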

\begin{proof}
First of all we would like to show that problem \eqref{Main Problem 2} is well defined, that is $u_\infty^* \in V(\Omega_\ell)$. Clearly $u_\infty^*\big|_{\Omega_\ell} \in L^2(\Omega_\ell)$.  Next we would like to show that 
$$\int_{\Omega_\ell}\int_{\R^n} \frac{|u_\infty^*(X)-u_\infty^*(Y)|^2}{|X-Y|^{n+2s}}dXdY < \infty.$$
We see, 
\begin{eqnarray*}
\int_{\Omega_\ell}\int_{\R^n} \frac{|u_\infty^*(X)-u_\infty^*(Y)|^2}{|X-Y|^{n+2s}}dXdY \leq 
\int_{-\ell}^\ell\int_\omega\int_{\R^n} \frac{|u_\infty(X_2)-u_\infty(Y_2)|^2}{|X_2-Y_2|^{n+2s}\left( 1 + \frac{|x_1-y_1|^2}{|X_2-Y_2|^2}\right)^{\frac{n+2s}{2}}}dYdX_2dx_1\\
\leq
 \int_{-\ell}^\ell\int_\omega\int_{\R^{n-1}}  \frac{|u_\infty(X_2)-u_\infty(Y_2)|^2}{|X_2-Y_2|^{n+2s}} 
 \left( \int_{\R} \frac{dy_1}{ \left(1 + \frac{|x_1-y_1|^2}{|X_2-Y_2|^2}\right)^{\frac{n+2s}{2}}} \right)dY_2dX_2dx_1 \\
 \leq
\Theta_n  \int_{-\ell}^\ell \left( \int_{\R^{n-1} \times \R^{n-1}}  \frac{|u_\infty(X_2)-u_\infty(Y_2)|^2}{|X_2-Y_2|^{n-1+2s}} dX_2dY_2 \right) dx_1 = 2\ell \Theta_n [u_\infty]_{H_\omega^s(\R^{n-1})},
\end{eqnarray*}
where $\Theta_n$ is as in Lemma \ref{connection}.
This ensures that $u_\infty^* \in V(\Omega_\ell)$.\smallskip

Now we show that $u_\infty^*$ is indeed a weak solution of the problem \eqref{Main Problem 2}.
Let $ \phi \in H_{\Omega_{\ell}}^s(\R^n)$
\begin{eqnarray}
\varepsilon(u_\infty^*,\phi) = C_{n,s}\int_{\R^n}\int_{\R^n} \frac{\{u_\infty^*(X)-u_\infty^*(Y)\}\phi(X)}{|X-Y|^{n+2s}}dXdY\nonumber\\
  = C_{n,s}\int_{\R^n}\int_{\R^n} \frac{  \{u_\infty(X_2)-u_\infty(Y_2)\}\phi(X)}{|X_2-Y_2|^{n+2s}\left( 1 + \frac{|x_1-y_1|^2}{|X_2-Y_2|^2}\right)^{\frac{n+2s}{2}}}dYdX.\nonumber
\end{eqnarray}
Integrating first with respect to $y_1$ and using the fact that $u_\infty$ satisfies \eqref{Limiting  Problem} weakly, one obtains
\begin{eqnarray*}
\varepsilon(u_\infty^*,\phi)  = C_{n,s}\Theta_n \int_{\R} \left( \int_{\R^{n-1}}\int_{\R^{n-1}} \frac{  \{u_\infty(X_2)-u_\infty(Y_2)\}\phi(x_1,X_2)}{|X_2-Y_2|^{n-1+2s}}dX_2dY_2\right)dx_1 \nonumber\\
= \frac{C_{n,s}\Theta_n}{C_{n-1,s}} \int_{\R} \left( \int_\omega f(X_2)\phi(x_1,X_2) dX_2 \right) dx_1 = \int_{\Omega_\ell} f(X_2)\phi(X)dX.
\end{eqnarray*}
To obtain the last equality we have used Lemma \ref{connection} and since the problem \eqref{Main Problem 2} admits unique solution, this completes the proof of the lemma. 
\end{proof}\\

The next theorem is the Poincar\'e inequality in fractional sobolev space for the domain of the form $\Omega_{\ell}$. We refer to Lemma 1 of \cite{Karen} for the proof. 

\begin{theorem}\label{poicare}
\emph[Poincar\'e Inequality] Let $\ell >0$ and $\Omega_{\ell}$ be the domain defined above. Then for every $u \in H_{\Omega_{\ell}}^s(\rn)$ there exists a constant $C>0$, independent of $\ell$, such that 
$$\int_{\Omega_\ell} u^2 \ dX \leq C \int_{\R^n}\int_{\R^n} \frac{\{u(X)-u(Y)\}^2}{|X-Y|^{n+2s}} \ dX \ dY. $$
\end{theorem}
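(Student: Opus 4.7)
My plan would be to reduce this inequality to the $(n-1)$-dimensional fractional Poincar\'e inequality on the bounded cross-section $\omega \subset \R^{n-1}$, whose constant carries no dependence on $\ell$, and then integrate out the $x_1$-variable.

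First, because $u \in H_{\Omega_\ell}^s(\R^n)$ vanishes on $\Omega_\ell^c$, for every $x_1 \in \R$ the slice $v_{x_1}(X_2) := u(x_1, X_2)$ is supported in $\omega$ (in fact identically zero whenever $|x_1| \geq \ell$). I would then apply the standard fractional Poincar\'e inequality on the bounded set $\omega \subset \R^{n-1}$, which provides a constant $C_\omega > 0$ depending only on $\omega$, $n$, $s$ with
\begin{equation*}
\int_{\R^{n-1}} v_{x_1}^2\, dX_2 \leq C_\omega \int_{\R^{n-1}} \int_{\R^{n-1}} \frac{|v_{x_1}(X_2) - v_{x_1}(Y_2)|^2}{|X_2 - Y_2|^{n-1+2s}}\, dX_2\, dY_2.
\end{equation*}
Integrating this over $x_1 \in \R$ and invoking Fubini (using again that $u \equiv 0$ off $\Omega_\ell$) would give
\begin{equation*}
\int_{\Omega_\ell} u^2\, dX \leq C_\omega \int_\R \int_{\R^{n-1}} \int_{\R^{n-1}} \frac{|u(x_1,X_2) - u(x_1, Y_2)|^2}{|X_2 - Y_2|^{n-1+2s}}\, dX_2\, dY_2\, dx_1.
\end{equation*}

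The main obstacle is to dominate the resulting right-hand side by $[u]_{H^s(\R^n)}^2$, since the $X_2$-Gagliardo kernel is evaluated on each $x_1$-slice rather than on the full $n$-dimensional difference quotient. The cleanest route is a Plancherel argument: the inner double integral equals, up to a constant, $\int_{\R^{n-1}} |\xi_2|^{2s} |\tilde u(x_1,\xi_2)|^2\, d\xi_2$, where $\tilde u$ denotes the partial Fourier transform in the $X_2$-variables. Integrating in $x_1$ and applying Plancherel in the $x_1$-variable converts this into $C \int_{\R^n} |\xi_2|^{2s} |\hat u(\xi)|^2\, d\xi$, which is bounded by $C [u]_{H^s(\R^n)}^2$ because $|\xi_2|^{2s} \leq (|\xi_1|^2 + |\xi_2|^2)^{s} = |\xi|^{2s}$.

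Alternatively, and more in the spirit of the paper, this step could be executed by direct integral manipulations using Lemma \ref{connection}. Rewriting the identity there as $|X_2 - Y_2|^{-(n-1+2s)} = \Theta_n^{-1} \int_\R |X - (y_1,Y_2)|^{-(n+2s)}\, dy_1$ promotes the bound to an integral over $\R^n \times \R^n$. The triangle inequality $|u(x_1, X_2) - u(x_1, Y_2)|^2 \leq 2|u(X) - u(Y)|^2 + 2|u(y_1,Y_2) - u(x_1, Y_2)|^2$, with $X = (x_1,X_2)$ and $Y = (y_1,Y_2)$, then splits the integrand into a first piece already controlled by $2\Theta_n^{-1}[u]_{H^s(\R^n)}^2$ and a second piece whose numerator is independent of $X_2$; integrating out $X_2$ in the second piece yields a one-dimensional slice semi-norm of $u(\cdot, Y_2)$ in $Y_2$, which can be dominated by $[u]_{H^s(\R^n)}^2$ by the same device applied now in the $x_1$-direction. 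Either route delivers an $\ell$-independent constant and completes the proof.
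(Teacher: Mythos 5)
The paper does not actually prove this theorem; it cites Lemma~1 of Yeressian~\cite{Karen}. So there is no in-paper proof to compare against, only the question of whether your argument is sound.

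Your first route (slice Poincar\'e on $\omega$, integrate in $x_1$, then Plancherel to pass from the slice Gagliardo seminorms to $[u]_{H^s(\R^n)}^2$ via $|\xi_2|^{2s}\leq |\xi|^{2s}$) is correct and gives a constant depending only on $\omega$, $n$, $s$. The only point worth stating explicitly is that you need $u(x_1,\cdot)\in L^2(\R^{n-1})$ for a.e.\ $x_1$ before using Plancherel in $X_2$, which is automatic from $u\in L^2(\R^n)$ and Fubini; the cross-section inequality then holds in the extended-real sense slice by slice, and everything integrates up cleanly.

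Your second route, however, is circular as written. After the kernel identity and the triangle inequality you are left with the quantity
$\int_{\R^{n-1}} [u(\cdot,Y_2)]^2_{H^s(\R)}\,dY_2$, and you propose to bound it ``by the same device applied now in the $x_1$-direction.'' But the ``same device'' --- write $|x_1-y_1|^{-(1+2s)}$ as a one-parameter kernel integral over $\R^{n-1}$, lift to an integral over $\R^n\times\R^n$, and split with the triangle inequality through the diagonal corner --- produces, as its residual second piece, exactly $\int_{\R}[u(x_1,\cdot)]^2_{H^s(\R^{n-1})}\,dx_1$, which is the quantity you started with in Step~2. Chasing the two estimates around the loop gives an inequality of the form $A\leq c_1[u]^2 + c_2 B$, $B\leq c_3[u]^2 + c_4 A$ with $c_2c_4\geq 1$, so nothing can be absorbed. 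No matter how you choose the intermediate point in the triangle inequality (through $(y_1,Y_2)$ or through $(y_1,X_2)$) you bounce between the two slice quantities without ever reaching $[u]^2_{H^s(\R^n)}$. So route~B does not close, and you should delete it or replace it with an actual proof of the slicing inequality (for which the Fourier argument of route~A is the standard tool).

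For what it is worth, the Poincar\'e inequality cited from~\cite{Karen} almost certainly uses a more elementary mechanism that avoids slicing altogether: since $\omega\subset B_R$ for some $R$ depending only on $\omega$, every $X\in\Omega_\ell$ has, within distance $\sim R$, a set $S(X)\subset\Omega_\ell^c$ of measure bounded below uniformly in $X$ and $\ell$ on which $u$ vanishes. Then
\begin{equation*}
u(X)^2\,|S(X)| = \int_{S(X)}|u(X)-u(Y)|^2\,dY \leq (CR)^{n+2s}\int_{\R^n}\frac{|u(X)-u(Y)|^2}{|X-Y|^{n+2s}}\,dY,
\end{equation*}
and integrating over $X\in\Omega_\ell$ gives the claim with a constant depending only on $\omega$, $n$, $s$. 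This is shorter than the Fourier route, requires no fractional Poincar\'e inequality on $\omega$ as input, and is the natural nonlocal analogue of the one-dimensional Poincar\'e argument in the local case.
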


We conclude this section by proving an a priori estimate using Poincar\'e inequality, that will be useful in proving Theorem \ref{main th}. 

\begin{lemma}\label{L2}
Let $f \in L^2(\omega)$ and $u_\ell \in H_{\Omega_{\ell}}^s(\rn)$ be the weak solution of the equation \eqref{Actual Problem}. Then for every $\ell$ there exists some constant $C> 0$, independent of $\ell$, such that 
$$\int_{\Omega_\ell} u_\ell^2 \leq  C \ ||f||_{L^2(\omega)}^2 \ell.$$
\end{lemma}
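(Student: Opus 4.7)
The plan is to run the standard energy estimate, testing the weak formulation of \eqref{Actual Problem} against $u_\ell$ itself and then using Poincar\'e (Theorem \ref{poicare}) together with a Cauchy--Schwarz bound. Because $f$ depends only on $X_2$, extending $f$ trivially to $\Omega_\ell = (-\ell,\ell)\times\omega$ gives $\|f\|_{L^2(\Omega_\ell)}^2 = 2\ell\,\|f\|_{L^2(\omega)}^2$, which is where the linear-in-$\ell$ factor will enter.

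First I would take $\phi = u_\ell \in H^s_{\Omega_\ell}(\R^n)$ as a test function in \eqref{main1}, giving
\begin{equation*}
\varepsilon_{\Omega_\ell}(u_\ell, u_\ell) \;=\; \int_{\Omega_\ell} f(X_2)\, u_\ell(X)\, dX.
\end{equation*}
By the symmetric form of $\varepsilon_{\Omega_\ell}$ and Theorem \ref{poicare}, which gives a Poincar\'e constant independent of $\ell$ (this is the crucial point, since otherwise the right-hand side would carry a bad $\ell$-dependence from Poincar\'e), one obtains
\begin{equation*}
\int_{\Omega_\ell} u_\ell^2\, dX \;\leq\; C\, \varepsilon_{\Omega_\ell}(u_\ell, u_\ell) \;=\; C \int_{\Omega_\ell} f(X_2)\, u_\ell\, dX,
\end{equation*}
where the constant $C$ absorbs the factor $C_{n,s}$.

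Next, Cauchy--Schwarz on the right-hand integral yields
\begin{equation*}
\int_{\Omega_\ell} f(X_2)\, u_\ell\, dX \;\leq\; \Bigl(\int_{\Omega_\ell} f(X_2)^2\, dX\Bigr)^{1/2} \Bigl(\int_{\Omega_\ell} u_\ell^2\, dX\Bigr)^{1/2} \;=\; \sqrt{2\ell}\,\|f\|_{L^2(\omega)} \Bigl(\int_{\Omega_\ell} u_\ell^2\Bigr)^{1/2}.
\end{equation*}
Combining with the previous display and dividing by $\bigl(\int_{\Omega_\ell} u_\ell^2\bigr)^{1/2}$ (which may be assumed positive, else the statement is trivial), one gets $\bigl(\int_{\Omega_\ell} u_\ell^2\bigr)^{1/2} \leq C\sqrt{2\ell}\,\|f\|_{L^2(\omega)}$; squaring gives the claimed bound with a new constant.

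I do not expect any genuine obstacle here: everything reduces to making sure that the Poincar\'e constant really is $\ell$-independent (guaranteed by Theorem \ref{poicare}) and that $f$, viewed as a function on $\Omega_\ell$, has $L^2$-norm growing exactly like $\sqrt{\ell}$ — both of which are immediate from the product structure of $\Omega_\ell$.
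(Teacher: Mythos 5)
Your proposal is correct and follows exactly the same route as the paper's proof: test the weak formulation against $u_\ell$, apply the $\ell$-independent Poincar\'e inequality of Theorem \ref{poicare}, then Cauchy--Schwarz (H\"older) and note that $\|f\|_{L^2(\Omega_\ell)}^2 = 2\ell\,\|f\|_{L^2(\omega)}^2$. Nothing is missing; the paper's version is just more terse.
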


\begin{proof}
Using $\phi = u_\ell$ in \eqref{main1}, Poincar\'e inequality and H\"older's inequality, it holds for some constant $C >0 $ (independent of $\ell$)
$$ \int_{\Omega_\ell} u_\ell^2 \leq C\int_{\Omega_\ell} f(X_2)u_\ell \leq  C \sqrt{\ell}  ||f||_{L^2(\omega)} ||u_\ell ||_{L^2(\Omega_\ell)}.$$
This proves the lemma.
\end{proof}

\section{Proof of Theorem \ref{main th}}\label{mao}
In this section we prove Theorem \ref{main th} which is one of the main result of this article. The subsequent function $\rho_{\ell}$ would play an integral role to prove the theorem.  
Let $\alpha \in (0,1)$ and $\x_\ell : \R \rightarrow \R$ be a Lipschitz continuous function on $\R$, defined by
\begin{equation*}\label{Main Problem 3}
 \rho_\ell = \left\{
 \begin{array}{ll}
  1 \hspace{8.7mm}&\textrm{in} \  (-\alpha \ell, \alpha \ell) \\
  0    \hspace{8.7mm} &\textrm{on}  \  (-\ell, \ell)^c, 
\end{array}
\right.
\end{equation*} 
such that $0 \leq \rho_\ell \leq 1$ and
$|\rho_\ell^{'}| \leq \frac{C}{\ell}$ for some constant $C>0$ depending on $\alpha$. Before going to the proof of the theorem, let us prove an important lemma which would be required for the proof.

\begin{lemma}
\label{inspace} 
Let $u_{\infty}$ be the weak solution of \eqref{Limiting  Problem} and $\x_\ell$ be  defined as above, then the function $\phi_\ell(X) := u_\infty(X_2) \x^2_\ell(x_1) \in H_{\Omega_\ell}^s(\R^n)$.
\end{lemma}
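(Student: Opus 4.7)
The plan is to verify the two defining requirements of $H_{\Omega_\ell}^s(\R^n)$: namely, that $\phi_\ell$ vanishes on $\Omega_\ell^c$ and that $\phi_\ell \in H^s(\R^n)$. The vanishing is immediate: if $X=(x_1,X_2)\in \Omega_\ell^c$, then either $|x_1|\geq \ell$, in which case $\rho_\ell(x_1)=0$, or $X_2\in\omega^c$, in which case $u_\infty(X_2)=0$; in either case $\phi_\ell(X)=0$. The $L^2$ bound is equally routine via Fubini, since $\rho_\ell$ is supported in $(-\ell,\ell)$ and $u_\infty \in L^2(\omega)$, giving $||\phi_\ell||_{L^2(\R^n)}^2 \leq 2\ell\, ||u_\infty||_{L^2(\omega)}^2$.

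The substantive task is to control the Gagliardo seminorm $[\phi_\ell]_{H^s(\R^n)}^2$. The starting point is the algebraic decomposition
\begin{align*}
\phi_\ell(X) - \phi_\ell(Y) = \rho_\ell^2(x_1)\,[u_\infty(X_2)-u_\infty(Y_2)] + u_\infty(Y_2)\,[\rho_\ell^2(x_1)-\rho_\ell^2(y_1)],
\end{align*}
which gives $|\phi_\ell(X)-\phi_\ell(Y)|^2 \leq 2 I_1(X,Y) + 2 I_2(X,Y)$, where $I_1$ and $I_2$ denote the squares of the two terms on the right. For the $I_1$-contribution, one freezes $x_1, X_2, Y_2$ and integrates $y_1$ over $\R$ via the change of variables $y_1 = x_1 + |X_2-Y_2|z$, reproducing exactly the computation of Lemma \ref{limiting lemma} and producing the factor $\Theta_n/|X_2-Y_2|^{n-1+2s}$. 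Coupled with $\int_\R \rho_\ell^4 \leq 2\ell$, this yields a bound of the form $C\,\ell\,\Theta_n\,[u_\infty]_{H^s(\R^{n-1})}^2$, which is finite since $u_\infty$ is the weak solution of \eqref{Limiting  Problem} and therefore lies in $H_\omega^s(\R^{n-1})$.

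For the $I_2$-contribution, one instead integrates $X_2$ over $\R^{n-1}$. Substituting $W = X_2 - Y_2$ and passing to spherical coordinates produces a factor $C/|x_1-y_1|^{1+2s}$ for a finite constant $C$ (the relevant one-dimensional integral $\int_0^\infty t^{n-2}/(1+t^2)^{(n+2s)/2}\,dt$ converges), and after integrating $Y_2$ against $u_\infty^2$ the whole integral reduces to
\begin{align*}
C\,||u_\infty||_{L^2(\omega)}^2 \int_\R \int_\R \frac{[\rho_\ell^2(x_1) - \rho_\ell^2(y_1)]^2}{|x_1-y_1|^{1+2s}}\, dx_1\, dy_1.
\end{align*}
The finiteness of this last integral follows from the fact that $\rho_\ell^2$ is Lipschitz with compact support, hence belongs to $H^1(\R) \subset H^s(\R)$ for every $s\in (0,1)$. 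Combining the two bounds gives $[\phi_\ell]_{H^s(\R^n)}<\infty$, and together with the vanishing and $L^2$ properties this shows $\phi_\ell\in H_{\Omega_\ell}^s(\R^n)$. The only real obstacle is the Fubini bookkeeping; once the product structure and the two-term decomposition are in hand, the estimates decouple cleanly into a ``fractional-in-$X_2$, flat-in-$x_1$'' piece and a ``fractional-in-$x_1$, flat-in-$X_2$'' piece.
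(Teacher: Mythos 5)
Your proof is correct, and the overall skeleton (verify vanishing outside $\Omega_\ell$, verify $L^2$, decompose the Gagliardo seminorm using the product structure, treat the ``fractional in $X_2$'' and ``fractional in $x_1$'' pieces by separate Fubini computations) matches the paper's. The genuine divergence is in how you control the piece involving $\rho_\ell^2(x_1)-\rho_\ell^2(y_1)$ (the paper's $I_3$). The paper handles it directly by splitting the integral into $|X-Y|<1$ and $|X-Y|\geq 1$, using the Lipschitz bound $\{\rho_\ell^2(x_1)-\rho_\ell^2(y_1)\}^2\leq C|X-Y|^2$ near the diagonal and mere boundedness of $\rho_\ell$ away from it; this reduces to the two elementary radial integrals $\int_{B(0,1)}|Z|^{-(n+2s-2)}\,dZ$ and $\int_{B(0,1)^c}|Z|^{-(n+2s)}\,dZ$, both finite for $s\in(0,1)$. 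You instead integrate out $X_2$ first to produce the factor $C|x_1-y_1|^{-(1+2s)}$, reducing the whole piece to the one-dimensional Gagliardo seminorm $[\rho_\ell^2]_{H^s(\R)}^2$, and then invoke $\rho_\ell^2\in H^1(\R)\hookrightarrow H^s(\R)$. Both are valid; your route is cleaner in that it treats the two pieces in a symmetric way (each reduces to a lower-dimensional fractional seminorm via Fubini), at the cost of importing the Sobolev embedding $H^1\subset H^s$, which the paper's near/far split avoids in favor of a fully self-contained computation. The two radial integrals you would have to check ($\int_0^\infty t^{n-2}(1+t^2)^{-(n+2s)/2}\,dt$) do converge for $n\geq 2$, so there is no hidden gap.
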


\begin{proof}
 It is clear that $\phi_\ell \in L^2(\R^n)$, as we see
\begin{equation*}
\int_{\R^n} \phi_\ell^2(X)\ dX = \left( \int_{-\ell}^\ell \rho_\ell^4(x_1)dx_1\right) \left( \int_{\omega} u_\infty^2(X_2)dX_2\right) <\infty.
\end{equation*} 
Now let us consider the semi-norm of $H_{\Omega_\ell}^s(\R^n)$.
\begin{eqnarray}
[\phi_\ell]_{H_{\Omega_\ell}^s(\R^n)} &=& \int_{\R^n}\int_{\R^n} \frac{|\phi_\ell(X)- \phi_\ell(Y)|^2}{|X-Y|^{n+2s}}dXdY \nonumber\\
&=&  \int_{\Omega_\ell}\int_{\R^n} \frac{|\phi_\ell(X)- \phi_\ell(Y)|^2}{|X-Y|^{n+2s}}dXdY  +  \int_{\Omega_\ell^c}\int_{\R^n} \frac{|\phi_\ell(X)- \phi_\ell(Y)|^2}{|X-Y|^{n+2s}}dXdY \nonumber\\
&\leq& 2 \int_{\Omega_\ell}\int_{\R^n} \frac{|\phi_\ell(X)- \phi_\ell(Y)|^2}{|X-Y|^{n+2s}}dXdY + 
 \int_{\Omega_\ell^c}\int_{\Omega_\ell^c} \frac{|\phi_\ell(X)- \phi_\ell(Y)|^2}{|X-Y|^{n+2s}}dXdY \nonumber\\
 &=&  2I_1 +I_2. \nonumber
\end{eqnarray}
Since $\phi_\ell = 0$ outside $\Omega_\ell$ this implies that $I_2 = 0$ and therefore it is sufficient to show that $I_1$ is  a finite term. 
\begin{eqnarray}
I_1 &=&   \int_{\Omega_\ell}\int_{\R^n} \frac{|\phi_\ell(X)- \phi_\ell(Y)|^2}{|X-Y|^{n+2s}}dXdY \nonumber \\
&\leq & 2 \int_{\Omega_\ell}\int_{\R^n} \frac{u_\infty^2(X_2)\{\x^2_\ell(x_1) -\x^2_\ell(y_1)\}^2}{|X-Y|^{n+2s}}dXdY
+  2\int_{\Omega_\ell}\int_{\R^n} \frac{\x_\ell^4(y_1)\{u_\infty(X_2) - u_\infty(Y_2)\}^2}{|X-Y|^{n+2s}}dXdY\nonumber\\
&=&2I_3 +2I_4\nonumber.
\end{eqnarray}
We first estimate the term $I_3$.
\begin{align*}
I_3 &= \int_{\Omega_\ell}\int_{|X-Y|< 1} \frac{u_\infty^2(X_2)\{\x^2_\ell(x_1) -\x^2_\ell(y_1)\}^2}{|X-Y|^{n+2s}}dXdY \nonumber\\
& \hspace{4cm}+ \int_{\Omega_\ell}\int_{|X-Y|\geq 1} \frac{u_\infty^2(X_2)\{\x^2_\ell(x_1) -\x^2_\ell(y_1)\}^2}{|X-Y|^{n+2s}}dXdY\nonumber\\
&\leq  C \int_{\Omega_\ell}\int_{|X-Y|< 1} \frac{u_\infty^2(X_2)|X-Y|^2}{|X-Y|^{n+2s}}dXdY +C  \int_{\Omega_\ell}\int_{|X-Y|\geq 1} \frac{u_\infty^2(X_2)}{|X-Y|^{n+2s}}dXdY\nonumber\\
&\leq   C\int_{\Omega_\ell} u_\infty^2(X_2) \ dX \int_{B(0,1)}\frac{dZ}{|Z|^{n+2s-2}} + C \int_{\Omega_\ell} u_\infty^2(X_2) \ dX \int_{B(0,1)^c}\frac{dZ}{|Z|^{n+2s}} \nonumber\\
&= K.
\end{align*}
The term $I_4 $  is  also finite  after following a similar argument as done in the beginning of the proof of  Lemma \ref{limiting lemma}. It concludes the proof.
\end{proof}\bigskip

\textbf{Proof of Theorem \ref{main th}.} \  Set $v_\ell := u_\ell -u_\infty$. Using the definition of $u_{\ell}$ and Lemma \ref{limiting lemma} we see that $v_\ell$ satisfies the following equation in weak sense:

\begin{equation*}\label{Main Problem 4}
  \left\{
 \begin{array}{ll}
 (-\lap )^s v_\ell
 =  0 \hspace{8.7mm}&\textrm{in} \ \Omega_\ell, \\
  v_\ell = - u_\infty(X_2)   \hspace{16.9mm} &\textrm{on}  \ \Omega_\ell^c, 
\end{array}
\right.
\end{equation*}
which is obtained after subtracting \eqref{Actual  Problem} from \eqref{Main Problem 2}.   Hence for $\phi\in H_{\Omega_\ell}^s(\R^n)$, one has 

\begin{equation}
\label{weak}
\frac{1}{2}C_{n,s}\int_{\R^n}\int_{\R^n} \frac{\{v_\ell(X)-v_\ell(Y)\}\{\phi(X) -\phi(Y)\}}{|X-Y|^{n+2s}}dXdY = 0.
\end{equation}

From previous lemma we know that $\phi_\ell := \rho_\ell^2(x_1)v_\ell(X) \in H_{\Omega_\ell}^s(\R^n) $, and hence the expression \eqref{weak} holds true with $\phi = \phi_\ell $. This implies that 
\begin{eqnarray}
0 &=& \int_{\R^n}\int_{\R^n} \frac{\{ v_\ell(X)-v_\ell(Y)\}\{ v_\ell(X)\x_\ell^2(x_1) -v_\ell(Y)\x_\ell^2(y_1) \}}{|X-Y|^{n+2s}}\ dX\ dY  \nonumber \\
&=&  \int_{\R^n}\int_{\R^n} \left[ \frac{\{v_\ell(X)-v_\ell(Y)\}^2 \rho_\ell^2(x_1)}{|X-Y|^{n+2s}}  +  \frac{  v_\ell(Y) \{ v_\ell (X) -v_\ell(Y) \}   \{ \x_\ell^2(x_1)- \rho_\ell^2(y_1)\} }{|X-Y|^{n+2s}} \right] dX\ dY.\nonumber
\end{eqnarray}
Therefore,
\begin{align*}\label{eq}
&\int_{\R^n}\int_{\R^n} \frac{\{v_\ell(X)-v_\ell(Y)\}^2 \rho_\ell^2(x_1)}{|X-Y|^{n+2s}} dX\ dY \\
&= - \int_{\R^n}\int_{\R^n} \frac{  v_\ell(Y) \{ v_\ell (X) -v_\ell(Y) \}   \{ \x_\ell^2(x_1)- \rho_\ell^2(y_1)\} }{|X-Y|^{n+2s}} dX dY\\
&\leq  \eps \int_{\R^n}\int_{\R^n} \frac{  \{v_\ell(X) -v_\ell(Y) \}^2\{\x_\ell(x_1) + \x_\ell(y_1)\}^2 }{|X-Y|^{n+2s}} dX dY+\frac{1}{\eps} \int_{\R^n}\int_{\R^n}\frac{\{ \x_\ell(x_1)_-\x_\ell(y_1)\}^2v_\ell^2(Y) }{|X-Y|^{n+2s}} dX dY\\ 
\end{align*}
\begin{align*}
&\leq  2\eps  \int_{\R^n}\int_{\R^n} \frac{ \{v_\ell(X) -v_\ell(Y) \}^2 \{ \x_\ell^2(x_1) + \x_\ell^2(y_1)\}}{|X-Y|^{n+2s}} dX dY+ \frac{1}{\eps} \int_{\R^n}\int_{\R^n}\frac{\{ \x_\ell(x_1)_-\x_\ell(y_1)\}^2v_\ell^2(Y) }{|X-Y|^{n+2s}} dX dY\\
&= 4\eps  \int_{\R^n}\int_{\R^n} \frac{ \{v_\ell(X) -v_\ell(Y) \}^2 \x_\ell^2(x_1) }{|X-Y|^{n+2s}} dX\ dY + \frac{1}{\eps} \int_{\R^n}\int_{\R^n}\frac{\{ \x_\ell(x_1)_-\x_\ell(y_1)\}^2v_\ell^2(Y) }{|X-Y|^{n+2s}} dX\ dY.
\end{align*}
Choosing $\eps$ small enough, one gets for some constant $C> 0$,
\begin{equation}
\label{eq2}
\int_{\R^n}\int_{\R^n} \frac{\{v_\ell(X)-v_\ell(Y)\}^2 \rho_\ell^2(x_1)}{|X-Y|^{n+2s}} dX\ dY\leq C
\int_{\R^n}\int_{\R^n}\frac{\{ \x_\ell(x_1)_-\x_\ell(y_1)\}^2v_\ell^2(Y) }{|X-Y|^{n+2s}} dX\ dY.
\end{equation}
Now applying Poincar\'e inequality, obtained from Theorem \ref{poicare}, we get that for some constant $C > 0$ (independent of $\ell$)
\begin{align}\label{poi}
&\int_{\Omega_\ell} (v_\ell\x_\ell)^2 \ dX \leq  C \int_{\R^n}\int_{\R^n \setminus \{0\}} \frac{\{ v_\ell(X)\x_\ell(x_1)-v_\ell(Y)\x_\ell(y_1)\}^2}{|X-Y|^{n+2s}}dXdY \nonumber \\
\leq & 2C\int_{\R^n}\int_{\R^n}\frac{\{ \x_\ell(x_1)_-\x_\ell(y_1)\}^2v_\ell^2(Y) }{|X-Y|^{n+2s}} dX\ dY+
2C\int_{\R^n}\int_{\R^n}\frac{\{ v_\ell(X)_-v_\ell(Y)\}^2\x_\ell^2(x_1) }{|X-Y|^{n+2s}} dX\ dY .
\end{align} 
Combining this \eqref{poi} with \eqref{eq2} and using Tonelli's theorem, we obtain for some constant $C > 0 $,

\begin{eqnarray*}
\int_{\Omega_\ell} (v_\ell\x_\ell )^2 \ dX &\leq & C\int_{\R^n}\int_{\R^n}\frac{\{ \x_\ell(x_1)_-\x_\ell(y_1)\}^2v_\ell^2(Y) }{|X-Y|^{n+2s}} \ dX\ dY \nonumber\\  &=& C \int_\R\int_{\R^n} \{ \x_\ell(x_1)_-\x_\ell(y_1)\}^2v_\ell^2(Y) \left(\int_{\R^{n-1}}\frac{1}{|X-Y|^{n+2s}}dX_2  \right) dx_1 \ dY\nonumber \\
&=&\frac{C}{\prod_{i=2}^n\Theta_{i}} \int_{\R^n}v_\ell^2(Y)\left(  \int_\R \frac{ \{ \x_\ell(x_1)_-\x_\ell(y_1)\}^2}{|x_1-y_1|^{1+2s}} dx_1 \right) dY \nonumber \\
&=& \frac{C}{\prod_{i=2}^n\Theta_{i}} \int_{\R^n}v_\ell^2(Y) J_\ell(y_1)dY,
\end{eqnarray*}
where  $$J_\ell(y_1) =  \int_\R \frac{ \{ \x_\ell(x_1)_-\x_\ell(y_1)\}^2}{|x_1-y_1|^{1+2s}} dx_1.$$
By definition of $v_{\ell}$, one obtains for some constant $C > 0$,
\begin{equation}
\label{fur}
\int_{\Omega_\ell} |v_\ell\x_\ell |^2  \ dY \leq  2C \int_{\R^n} u_\ell^2(Y)J_\ell(y_1) \ dY +  2C \int_{\R^n} u_\infty^2(Y_2) J_\ell(y_1) \ dY := 2C I_{\ell} +2C I_\infty.
\end{equation}
We now estimate the term $J_\ell(y_1)$. \\

\noi\underline{ \textbf{ Case 1}:  \hspace{4mm} $ y_1 \in (-2\ell, 2\ell)^c $} \\

In this case $\rho_\ell(y_1) = 0$, therefore
\begin{equation}
|J_{\ell}(y_1)  | \leq   \  ||\rho_\ell ||_{L^{\infty}}^2   \int_{-\ell}^{\ell} \frac{  dx_1}{|x_1-y_1|^{1+2s}} \leq \frac{C}{|\ell - y_1|^{2s}} + \frac{C}{|\ell + y_1|^{2s}}.\nonumber
\end{equation}
\noi\underline{ \textbf{ Case 2}:  \hspace{4mm} $ y_1 \in (-2\ell, 2\ell) $} \\

We estimate  this case using mean value theorem and $|\rho_\ell^{'} | \leq \frac{C}{\ell}$. We rewrite the integral as 
\begin{align*}
J_{\ell}(y_1) = \int_{\R} \frac{\{ \x_{\ell}(y_1+z_1) -\x_{\ell}(y_1) \}^2}{|z_1|^{1+2s}} \ dz_1.
\end{align*} 
Hence, 
\begin{align}\label{x}
|J_{\ell}(y_1)| & \leq C \int_{-\ell}^\ell \frac{ \{ \x_\ell(y_1+z_1)_-\x_\ell(y_1)\}^2}{|z_1|^{1+2s}} \ dz_1 + C\int_{(-\ell,\ell)^c}\frac{ \{ \x_\ell(y_1+z_1)_-\x_\ell(y_1)\}^2}{|z_1|^{1+2s}} \ dz_1 \nonumber\\
& \leq \frac{C}{\ell^2}  \int_{-\ell}^\ell \frac{ |z_1|^{2}}{|z_1|^{1+2s}} \ dz_1+ 2C||\rho_\ell ||_\infty^2 \int_{(-\ell,\ell)^c}\frac{ dz_1 }{|z_1|^{1+2s}}  \quad \leq 
\frac{C}{\ell^{2s}}.\nonumber
\end{align}
Combining the two cases we get 

\begin{equation}\label{Main Problem f3}
 J_\ell(y_1) \leq \left\{
 \begin{array}{ll}
   \frac{C}{\ell^{2s}} \hspace{8.7mm}&\textrm{in} \  (-2\ell, 2\ell), \\
  \frac{C}{|\ell - y_1|^{2s}} + \frac{C}{|\ell + y_1|^{2s}} \hspace{16.9mm} &\textrm{on}  \  (-2\ell,2\ell)^c. 
\end{array}
\right.
\end{equation}
Now we estimate the term $I_\infty$ in \eqref{fur} using the above estimate \eqref{Main Problem f3} for $J_\ell$.
\begin{multline}\label{Main Problem I1}
I_\infty = \int_{\R^n} u_\infty^2(Y)J_\ell(y_1)\ dY = \int_\omega u_\infty^2(Y_2)dY_2  \int_{-\infty}^{\infty}J_\ell(y_1)dy_1 \\ 
= ||u_\infty||_{L^2(\omega)}^2 \left\{ \int_{-2\ell}^{2\ell} J_\ell(y_1) dy_1+ \int_{(-2\ell,2\ell)^c} J_\ell(y_1)dy_1\right\} \leq \frac{C||u_\infty||_{L^2(\omega)}^2 }{\ell^{2s-1}},
\end{multline}
if $s > \frac{1}{2}$. Finally we estimate the term $I_\ell$, using Lemma \ref{L2}. As $u_\ell =0$ outside $\Omega_\ell$, we have 
\begin{equation}\label{Main Problem I2}
I_\ell = \int_{\Omega_\ell}u_\ell^2 J_\ell  \leq   \frac{C}{\ell^{2s}} \int_{\Omega_\ell} u_\ell^2 \leq  \frac{C ||f||_{L^2(w)}}{\ell^{2s-1}}.
\end{equation}
Finally, since $\rho_\ell = 1$ on $(-\alpha \ell,\alpha \ell)$, we obtain from \eqref{fur} by using \eqref{Main Problem I1} and \eqref{Main Problem I2} that 
 
$$\int_{\Omega_{\alpha \ell}}(u_\ell -u_\infty)^2 \ dX \leq \frac{C}{\ell^{2s-1} },$$
where the constant $C$ is independent of $\ell$. Since $s> \frac{1}{2}$, this concludes the proof of the theorem by letting $\ell \rightarrow \infty$. \quad \hfill$\square$

 \section{Proof of Theorem \ref{fd}}\label{ss}

In this section, we are going to study the second part of this article, i,e, we generalize Theorem \ref{Karen} for every $s \in (0,1)$. For a bounded and Lipschitz continuous function $\rho$ on $\rn$, similar to the article \cite{Karen}, let us define $S_s(\rho)$  by  
\begin{equation*}\label{s}
 S_s\left( \rho\right)(X) := \int_{\R^n \setminus \{0\}}\left(\sqrt{\rho(X+Y)} - \sqrt{\rho(X)}\right)^2\frac{dY}{|Y|^{n+2s}}.
\end{equation*}
Our aim is to construct a function $\rho$ for each $s\in (0,1)$ which satisfies 
\begin{align} \label{rho inequality}
S_s(\rho)(X) \leq C_0 \gamma  \rho(X) , \ \forall X \in \rn \quad \mbox{where,} \quad 0<\gamma < \frac{1}{10}.
\end{align}
Once we construct suitable $\rho$ which satisfies \eqref{rho inequality}, then the proof of Theorem \ref{fd} would be an easy consequence of following proposition.
\begin{proposition} \label{KAREN1}
Let $u_{\ell}\in H^s_{\Omega_{\ell}}(\rn)$ be the weak solution of \eqref{Main Problem}. If for some $\gamma < \frac{1}{10}$ there exist a bounded Lipschitz continuous function $\rho$ such that \eqref{rho inequality} holds. Then for some constant $C^1_{\gamma}>0$ independent of $\ell$, we have 
\begin{align*}
\int_{\Omega_{\ell}} u_{\ell}^2(X) \rho(X) \ dX \leq C_{\gamma}^1 \int_{\rn} f_{\ell}^2(X) \rho(X) \ dX. 
\end{align*}
\end{proposition}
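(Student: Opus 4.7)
The plan is to test the weak formulation \eqref{main1} of \eqref{Main Problem} against the weighted test function $\phi = u_\ell \rho$. Because $\rho$ is bounded and Lipschitz while $u_\ell \in H^s_{\Omega_\ell}(\rn)$ vanishes on $\Omega_\ell^c$, a routine near/far splitting of the Gagliardo double integral defining $[u_\ell \rho]^2_{H^s(\rn)}$ (Lipschitz control of $\rho$ near the diagonal, boundedness of $\rho$ away from it) shows $\phi \in H^s_{\Omega_\ell}(\rn)$, so the test is admissible. The crucial step is then the pointwise algebraic identity
\begin{equation*}
(a-b)(a\alpha-b\beta) = \bigl(a\sqrt\alpha - b\sqrt\beta\bigr)^2 - ab\bigl(\sqrt\alpha-\sqrt\beta\bigr)^2,\qquad a,b\in\R,\ \alpha,\beta\geq 0,
\end{equation*}
applied with $a=u_\ell(X)$, $b=u_\ell(Y)$, $\alpha=\rho(X)$, $\beta=\rho(Y)$. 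Integrating against the singular kernel $C_{n,s}/|X-Y|^{n+2s}$ and using $\varepsilon_{\Omega_\ell}(u_\ell,u_\ell\rho) = \int f_\ell u_\ell \rho$ yields
\begin{equation*}
\frac{C_{n,s}}{2}[u_\ell\sqrt\rho]^2_{H^s(\rn)} = \int_{\Omega_\ell} f_\ell\, u_\ell\, \rho + \frac{C_{n,s}}{2}\int_{\rn}\int_{\rn}\frac{u_\ell(X)u_\ell(Y)\bigl(\sqrt{\rho(X)}-\sqrt{\rho(Y)}\bigr)^2}{|X-Y|^{n+2s}}\,dX\,dY.
\end{equation*}
The first term is a clean $H^s$ energy of $u_\ell\sqrt\rho$, and the double integral is the error caused by inserting the weight.

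To close the inequality I would bound the cross integral by $|u_\ell(X)u_\ell(Y)|\leq \tfrac12(u_\ell^2(X)+u_\ell^2(Y))$ and exploit the symmetry of the kernel to obtain the clean estimate $\int u_\ell^2(X)\,S_s(\rho)(X)\,dX$, which the hypothesis \eqref{rho inequality} in turn controls by $C_0\gamma \int u_\ell^2 \rho$. For the forcing term, Cauchy--Schwarz applied to $f_\ell\sqrt\rho$ and $u_\ell\sqrt\rho$ gives $\int_{\Omega_\ell} f_\ell u_\ell \rho \leq \bigl(\int f_\ell^2 \rho\bigr)^{1/2}\bigl(\int u_\ell^2 \rho\bigr)^{1/2}$. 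Finally, since $u_\ell\sqrt\rho$ vanishes outside $\Omega_\ell$, Theorem \ref{poicare} provides a Poincaré constant $C_P$ independent of $\ell$ with $\int u_\ell^2 \rho \leq C_P\,[u_\ell\sqrt\rho]^2_{H^s(\rn)}$. Collecting these three bounds produces
\begin{equation*}
\left(\frac{C_{n,s}}{2C_P} - \frac{C_{n,s}C_0\gamma}{2}\right)\int u_\ell^2 \rho \;\leq\; \Bigl(\int f_\ell^2 \rho\Bigr)^{1/2}\Bigl(\int u_\ell^2 \rho\Bigr)^{1/2};
\end{equation*}
for $\gamma<1/10$ (with $C_0$ calibrated against the Poincaré constant so that this threshold is sufficient) the left coefficient is strictly positive, and dividing by $\bigl(\int u_\ell^2 \rho\bigr)^{1/2}$ and squaring gives the claimed weighted estimate with $C_\gamma^1$ depending only on $\gamma$, $C_0$, $C_P$, and $C_{n,s}$.

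The main conceptual obstacle I expect is precisely the cross-integral bound. Since $\rho$ is only Lipschitz and may vanish, $\sqrt\rho$ need not be Lipschitz, and one cannot hope to bound $(\sqrt{\rho(X)}-\sqrt{\rho(Y)})^2/|X-Y|^{n+2s}$ directly by something comparable to $\rho$. What rescues the argument is that one never has to estimate $[u_\ell\sqrt\rho]^2_{H^s}$ from above: the algebraic identity places it on the favourable side of the equation, and the cross term is absorbed using the pointwise scalar quantity $S_s(\rho)$, for which the hypothesis $S_s(\rho)\leq C_0\gamma\rho$ is tailor-made. A secondary, more bookkeeping-type difficulty is tracking the constants $C_{n,s}$, $C_0$, $C_P$ carefully enough to confirm that the stated threshold $\gamma<1/10$ truly suffices to make the absorbing coefficient positive.
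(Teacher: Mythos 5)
Your argument is correct, and it reconstructs what the paper itself delegates to a citation: the text disposes of Proposition~\ref{KAREN1} in one line by invoking Lemma~3 and Theorem~1 of Yeressian \cite{Karen}, without reproducing the weighted energy estimate. Your proof supplies exactly the missing content, and it is the standard mechanism behind such estimates. The three pillars are all sound: (i) the admissibility of $\phi=u_\ell\rho$ as a test function, which you justify correctly by the near/far splitting (Lipschitz control for $|X-Y|<1$, boundedness of $\rho$ for $|X-Y|\geq 1$, both against $\|u_\ell\|_{L^2}^2$ since $n+2s-2<n$); (ii) the algebraic identity $(a-b)(a\alpha-b\beta)=(a\sqrt\alpha-b\sqrt\beta)^2-ab(\sqrt\alpha-\sqrt\beta)^2$, which is verified by direct expansion and is precisely what makes the quantity $S_s(\rho)$ (built from $\sqrt\rho$, not $\rho$) the natural object; and (iii) the absorption of the cross term via $|u_\ell(X)u_\ell(Y)|\leq\tfrac12(u_\ell^2(X)+u_\ell^2(Y))$ together with the symmetry of the kernel, which converts the double integral into $\int u_\ell^2\,S_s(\rho)$ and hands the hypothesis \eqref{rho inequality} exactly the quantity it controls. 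Your closing remark is also the right conceptual summary: since $\sqrt\rho$ need not be Lipschitz when $\rho$ vanishes, a direct estimate of the cross-kernel is hopeless, and the argument succeeds only because the seminorm $[u_\ell\sqrt\rho]^2_{H^s}$ sits on the favourable side of the identity and never needs an upper bound.

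The only point deserving a slightly sharper statement is the role of the threshold $\gamma<1/10$. Your absorption requires $C_0\gamma<1/C_P$, where $C_P$ is the ($\ell$-independent) Poincar\'e constant from Theorem~\ref{poicare}; this is not automatic from $\gamma<1/10$ alone unless $C_0$ is tied to $C_P$, and you are right to flag it as a calibration. In the paper's actual use of the proposition this is immaterial: Theorem~\ref{S} gives $S_s(\rho_{\eps,\lambda})\leq (C_\eps/\lambda^{2s})\rho_{\eps,\lambda}$, so the prefactor can be made as small as desired by taking $\lambda$ large, and the numerical threshold $1/10$ (inherited from \cite{Karen}) is never tight. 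It would be cleanest to record the conclusion as holding for all $\gamma<\gamma_0:=1/(C_0C_P)$ and then observe that the application only ever needs $\gamma$ small.
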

The proof of Proposition \ref{KAREN1} is a straightforward application of Lemma 3 and Theorem 1 of article \cite{Karen} by Yeressian. \\
 
To this end, for $0 < \eps \leq 2$  let us define 
$$\phi_\eps(t) = \min \left\{ \frac{1}{2} ,\  \frac{1}{|t|^\eps + 1} \right\}.$$ 
Clearly $\phi_\eps$ is Lipschitz continuous and bounded. Then the function $\rho_{\eps, \lambda}$ on $\R^n$ defined  by
\begin{equation*}\label{ssd}
\rho_{\eps, \lambda}(X) := \phi_\eps \left(\frac{x_1}{\lambda}\right)
\end{equation*}
is also Lipschitz continuous and bounded. The next theorem can be considered as the main step for the proof of Theorem \ref{fd}.

\begin{theorem}
 \label{S}
For each $\lambda >0$ and $\eps < 2s$ one has for some constant $C = C(\eps) > 0$,  $$S_s\left( \rho_{\eps,\lambda}\right)(X) \leq \frac{C_\eps}{\lambda^{2s}}\rho_{\eps,\lambda}(X).$$ 
\end{theorem}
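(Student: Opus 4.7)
The plan is to first strip $\lambda$ by scaling, reduce to a one-dimensional integral via Fubini, and then carry out a case analysis on $|x_1|$. The change of variable $Y=\lambda Z$ in the defining integral of $S_s(\rho_{\eps,\lambda})(X)$ combines the Jacobian $\lambda^n$ with $|Y|^{-n-2s}=\lambda^{-n-2s}|Z|^{-n-2s}$ to pull out a factor $\lambda^{-2s}$; since $\rho_{\eps,\lambda}(X)=\phi_\eps(x_1/\lambda)$ depends only on the first coordinate, one obtains $S_s(\rho_{\eps,\lambda})(X)=\lambda^{-2s} S_s(\rho_{\eps,1})(X/\lambda)$ while $\rho_{\eps,\lambda}(X)=\rho_{\eps,1}(X/\lambda)$, reducing everything to the case $\lambda=1$. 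Since $\rho_{\eps,1}$ depends only on $x_1$, writing $Y=(y_1,Y_2)$ and using the rescaling $Y_2=|y_1|W$ to evaluate the inner integral $\int_{\R^{n-1}}|Y|^{-n-2s}\,dY_2 = c_{n,s}|y_1|^{-1-2s}$, the problem becomes the one-dimensional estimate
$$ I(x_1) := \int_{\R} \bigl(g(x_1+y_1)-g(x_1)\bigr)^2\,\frac{dy_1}{|y_1|^{1+2s}} \;\leq\; C_\eps\, g(x_1)^2, \qquad g:=\sqrt{\phi_\eps}, $$
which I would establish by case analysis on $|x_1|$ (using the symmetry of $\phi_\eps$ to assume $x_1\geq 0$).

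For $x_1\leq 2$, one has $g(x_1)^2\geq 1/(2^\eps+1)$ bounded below, and $I(x_1)$ is bounded above uniformly by splitting at $|y_1|=1$: the near part uses the Lipschitz estimate on $g$, which produces an integrable $|y_1|^{1-2s}$ singularity at the origin since $s<1$, while the far part uses uniform boundedness of $g$ together with the integrable tail $|y_1|^{-1-2s}$. For $x_1=t>2$, one has $g(t)^2\approx t^{-\eps}$, and I split the 1D integral into a near region $|y_1|<t/2$ and a far region $|y_1|\geq t/2$. In the near region, $t+y_1\in(t/2,3t/2)\subset(1,\infty)$, so $g$ is smooth with $|g'(s)|=\tfrac{\eps}{2}s^{\eps-1}/(s^\eps+1)^{3/2}\leq Cs^{-\eps/2-1}$; the mean value theorem yields $(g(t+y_1)-g(t))^2\leq Cy_1^2 t^{-\eps-2}$, and integrating $y_1^2/|y_1|^{1+2s}$ over $|y_1|<t/2$ gives a contribution of $Ct^{-\eps-2s}$. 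In the far region, the inequality $(a-b)^2\leq 2a^2+2b^2$ splits the integrand into the $g(t)^2$-piece, bounded by $Cg(t)^2 t^{-2s}\leq Ct^{-\eps-2s}$, and the $g(t+y_1)^2=\phi_\eps(t+y_1)$-piece, which after the substitution $u=t+y_1$ becomes $\int_{|u-t|\geq t/2}\phi_\eps(u)/|u-t|^{1+2s}\,du$.

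This last piece is handled by splitting $\R$ into the four ranges $u\leq 0$, $0<u\leq t/2$, $3t/2\leq u\leq 2t$, and $u\geq 2t$. The first and last exploit the tail bound $\phi_\eps(u)\leq|u|^{-\eps}$ together with $|u-t|\geq|u|/2$ to yield $Ct^{-\eps-2s}$; the range $3t/2\leq u\leq 2t$ uses $\phi_\eps(u)\leq Ct^{-\eps}$ to yield $Ct^{-\eps-2s}$. The middle range $0<u\leq t/2$ is the delicate one: using only the trivial bound $\phi_\eps\leq 1/2$ gives a contribution of order $t^{-2s}$, and the hypothesis $\eps<2s$ enters here to allow $t^{-2s}\leq t^{-\eps}$ for $t\geq 1$, so that the contribution is still absorbed into $Cg(t)^2$. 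Assembling all estimates gives $I(t)\leq Ct^{-\eps}\approx Cg(t)^2$ for $t>2$, which together with the $|x_1|\leq 2$ case completes the proof. The main technical obstacle is the careful bookkeeping in this four-way split of the $\phi_\eps(u)$ integral, and the hypothesis $\eps<2s$ plays its role precisely in dominating the slowly decaying middle-range contribution.
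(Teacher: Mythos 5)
Your reduction to the one-dimensional estimate $I(x)\leq C_\eps\phi_\eps(x)$ matches the paper's (scaling out $\lambda$ and integrating out the transverse variables commute, so the order is immaterial), and the underlying mechanisms coincide: a mean-value argument near the origin and the hypothesis $\eps<2s$ for integrability far away. The organization differs. The paper splits uniformly at $|\tau|=1$ and isolates two lemmas that carry the whole argument --- a global pointwise comparison $\phi_\eps(z+\tau)\leq K\phi_\eps(z)|\tau|^\eps$ for $|\tau|\geq 1$ (Lemma~\ref{sup}), which yields the far-field bound after integrating against $|\tau|^{-1-2s}$, and a derivative bound $|(\sqrt{\phi_\eps(x+\xi)})'|^2\leq C_\eps\phi_\eps(x)$ for $|\xi|\leq 1$ (Lemma~\ref{grad}), which yields the near-field bound --- whereas you split at the position-dependent scale $|y_1|=t/2$ and estimate the far integral directly by a four-way range decomposition.

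There is, however, a gap in your treatment of the range $u\leq 0$ of $\int\phi_\eps(u)\,|u-t|^{-1-2s}\,du$. Combining $\phi_\eps(u)\leq|u|^{-\eps}$ with $|u-t|\geq|u|/2$ gives the majorant $\int_{-\infty}^0 |u|^{-\eps}\,(|u|/2)^{-1-2s}\,du$, which diverges at $u=0$; the claimed contribution $Ct^{-\eps-2s}$ is therefore not established by the stated argument. The point is that for small $|u|$ the controlling quantity is $|u-t|\approx t$, not $|u|$. A correct estimate splits $u\leq 0$ at $u=-t$: for $-t\leq u\leq 0$ use $|u-t|\geq t$ and $\phi_\eps\leq\tfrac12$, giving a contribution $\leq Ct\cdot t^{-1-2s}=Ct^{-2s}$; for $u\leq -t$ your tail bound does apply and gives $Ct^{-\eps-2s}$. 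Thus the $u\leq 0$ piece is $O(t^{-2s})$, not $O(t^{-\eps-2s})$, and --- exactly as in your ``delicate'' middle range $0<u\leq t/2$ --- one must again invoke $\eps<2s$ to absorb the $t^{-2s}$ into $C\phi_\eps(t)\approx Ct^{-\eps}$. With this correction the proof is complete, and it then constitutes a valid if more hands-on alternative to the paper's lemma-based route.
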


\begin{remark}
For $\eps \in (1,2]$ we can  also take $\phi_\eps (t) := \frac{1}{1+ |t|^\eps}$, then also  the conclusion of Theorem \ref{S} remains true. One can adopt a similar line of proof for this case.  
\end{remark}

We need the following two lemmas in order to prove Theorem \ref{S}. First let us prove these lemmas.

\begin{lemma}
\label{sup}
For $\eps >0$, there exist some constant $K =K(\eps) > 0$ such that, $$\sup_{z \in \R, |\tau | \geq 1} \frac{\phi_\eps(z +\tau)}{\phi_\eps(z) |\tau|^\eps} = K < \infty.$$ 
\end{lemma}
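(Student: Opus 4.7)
The strategy is to replace $\phi_\eps$ by the smoother model function $\psi_\eps(t) := 1/(1 + |t|^\eps)$, which enjoys clean behaviour under translation, and then apply the elementary inequality $|z|^\eps \leq C_\eps(|z+\tau|^\eps + |\tau|^\eps)$ with $C_\eps = \max\{1,2^{\eps-1}\}$, coming from the triangle inequality $|z| \leq |z+\tau| + |\tau|$.

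First I would establish the two-sided comparison $\tfrac{1}{2}\psi_\eps(t) \leq \phi_\eps(t) \leq \psi_\eps(t)$ for every $t \in \R$. The upper bound is immediate from the definition of $\phi_\eps$. For the lower bound, note $\psi_\eps(t) \leq 1$: if $\psi_\eps(t) \leq 1/2$ then the minimum defining $\phi_\eps$ equals $\psi_\eps(t)$, whereas if $\psi_\eps(t) > 1/2$ the minimum equals $1/2 \geq \psi_\eps(t)/2$. This reduces the problem to controlling
$$\frac{\psi_\eps(z+\tau)}{\psi_\eps(z)\,|\tau|^\eps} = \frac{1 + |z|^\eps}{(1+|z+\tau|^\eps)\,|\tau|^\eps}$$
uniformly in $z \in \R$ and $|\tau| \geq 1$, up to a multiplicative factor of $2$.

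Next, writing $z = (z+\tau) - \tau$ and applying the triangle inequality gives $|z|^\eps \leq C_\eps(|z+\tau|^\eps + |\tau|^\eps)$, so splitting the numerator into three pieces yields
$$\frac{1 + |z|^\eps}{(1+|z+\tau|^\eps)|\tau|^\eps} \leq \frac{1}{|\tau|^\eps(1+|z+\tau|^\eps)} + \frac{C_\eps|z+\tau|^\eps}{|\tau|^\eps(1+|z+\tau|^\eps)} + \frac{C_\eps}{1+|z+\tau|^\eps}.$$
Each of the three summands is bounded by a constant depending only on $\eps$: the first two by $|\tau|^{-\eps} \leq 1$ and $C_\eps|\tau|^{-\eps} \leq C_\eps$ respectively (since $|\tau| \geq 1$), and the third by $C_\eps$ (since $1 + |z+\tau|^\eps \geq 1$). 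Taking the supremum produces a finite $K(\eps)$, as required.

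I do not anticipate any serious obstacle. The whole argument is elementary; the only point requiring mild care is the equivalence $\phi_\eps \sim \psi_\eps$, which hinges on the trivial observation that $\psi_\eps \leq 1$ and is precisely what allows us to replace the piecewise-defined $\phi_\eps$ by a single smooth expression before invoking the triangle inequality.
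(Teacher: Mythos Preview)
Your proof is correct and takes a genuinely different route from the paper's. The paper argues by a three-way case split in the $(z,\tau)$ plane --- distinguishing $|z|\leq 2$, and for $|z|\geq 2$ the two subcases $1\leq|\tau|\leq |z|/2$ and $|\tau|\geq |z|/2$ --- and bounds the quotient separately in each region, obtaining the explicit constant $K=2^\eps+1$. You instead first sandwich $\phi_\eps$ between $\tfrac12\psi_\eps$ and $\psi_\eps$ with $\psi_\eps(t)=1/(1+|t|^\eps)$, which strips away the piecewise definition, and then handle the single algebraic expression $(1+|z|^\eps)/\bigl((1+|z+\tau|^\eps)|\tau|^\eps\bigr)$ in one stroke via the elementary inequality $|z|^\eps\leq C_\eps(|z+\tau|^\eps+|\tau|^\eps)$. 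Your argument is shorter and avoids any case distinction on $z$; the price is a slightly worse constant, $2(1+2C_\eps)$ in place of $2^\eps+1$, which is irrelevant for the lemma's purpose. Both approaches ultimately rest on the same heuristic --- that $\phi_\eps(z)\sim |z|^{-\eps}$ for large $|z|$ --- but your reduction to $\psi_\eps$ makes this transparent from the outset.
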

\begin{proof}
We write 
\begin{align*}
\sup_{z \in \R, |\tau | \geq 1} \frac{\phi_\eps(z +\tau)}{\phi_\eps(z) |\tau|^\eps} \leq \max \{A, B, C \}
\end{align*} 
where, 
$$A = \sup_{|z| \leq 2, |\tau | \geq 1} \frac{\phi_\eps(z +\tau)}{\phi_\eps(z) |\tau|^\eps}, \  B = \sup_{ |z| \geq 2, \  1 \leq |\tau |\leq \frac{|z|}{2}}
\frac{\phi_\eps(z +\tau)}{\phi_\eps(z) |\tau|^\eps} \  \textrm{and} \ C =\sup_{|z| \geq 2, \  |\tau | \geq \frac{|z|}{2}} \frac{\phi_\eps(z +\tau)}{\phi_\eps(z) |\tau|^\eps}.$$
Using $\phi_\eps(z) \geq \phi_\eps(2)$ on the set $|z| \leq 2$ and $||\phi_\eps||_{L^{\infty}(\R)} \leq \frac{1}{2}$, we get trivially 
\begin{equation}\label{a}
A \leq \frac{1+2^\eps}{2}.
\end{equation}
Now we estimate the term $B$. On the set $|\tau | \leq \frac{|z|}{2}$, using triangle inequality we get $|z + \tau | \geq \frac{|z|}{2}$. Using this we have 
\begin{multline}\label{b}
B = \sup_{ |z| \geq 2, \  1 \leq |\tau |\leq \frac{|z|}{2}}
\frac{\phi_\eps(z +\tau)}{\phi_\eps(z) |\tau|^\eps} = \sup_{ |z| \geq 2, \  1 \leq |\tau |\leq \frac{|z|}{2}}\frac{1+ |z|^\eps}{(1+|z+\tau|^\eps)|\tau|^\eps}\\
\leq 
\sup_{ |z| \geq 2, \  1 \leq |\tau |\leq \frac{|z|}{2}}\frac{1+ |z|^\eps}{|z+\tau|^\eps} 
\leq  2^\eps \sup_{ |z| \geq 2, \  1 \leq |\tau |\leq \frac{|z|}{2}}\frac{1+ |z|^\eps}{|z|^\eps} \leq 2^\eps + 1.
\end{multline}
Now we estimate the term $C$, using $||\phi_{\varepsilon}||_{L^{\infty}(\R)}\leq \frac{1}{2}$  we see
\begin{equation}
\label{c}
C \leq 2^\eps ||\phi_{\varepsilon}||_{L^{\infty}(\R)}  \sup_{|z| \geq 2, \  |\tau | \geq \frac{|z|}{2}} 1 + \frac{1}{|z|^\eps} \leq \frac{2^\eps + 1}{2}.
\end{equation}
The lemma then follows with $K = 2^\eps + 1,$ after combining \eqref{a}, \eqref{b} and \eqref{c}.
\end{proof}

\begin{lemma}
\label{grad}
There exist a constant $C_\eps > 0 $, such that $$\left| \left( \sqrt{\phi_\eps(x+\xi)}\right){'}\right|^2 \leq C_\eps \phi_\eps(x), \hspace{2mm} \mbox{for almost all} \ \ x \in \R,   \ |\xi| \leq 1.$$
\end{lemma}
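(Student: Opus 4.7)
The plan is to reduce the claim to two pointwise inequalities: (i) a self-referential bound $|(\sqrt{\phi_\eps})'(t)|^2 \le C_\eps \phi_\eps(t)$ a.e.\ in $t$, and (ii) a local comparability bound $\phi_\eps(x+\xi) \le C'_\eps \phi_\eps(x)$ for $|\xi| \le 1$. Chaining these with $(\sqrt{\phi_\eps(x+\xi)})' = (\sqrt{\phi_\eps})'(x+\xi)$ (chain rule) gives the stated inequality with $C_\eps C'_\eps$.

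For (i), I would split according to the definition of $\phi_\eps$. On $|t|<1$ we have $\phi_\eps(t) = 1/2$, so its derivative vanishes and the inequality is trivial. On $|t|>1$, $\phi_\eps(t) = (|t|^\eps + 1)^{-1}$ is smooth, and a direct computation yields
\[
\phi_\eps'(t) = -\frac{\eps\,\mathrm{sgn}(t)\,|t|^{\eps-1}}{(|t|^\eps+1)^2}, \qquad |(\sqrt{\phi_\eps})'(t)|^2 = \frac{|\phi_\eps'(t)|^2}{4\phi_\eps(t)} = \frac{\eps^2 |t|^{2\eps-2}}{4(|t|^\eps+1)^3}.
\]
Dividing by $\phi_\eps(t) = (|t|^\eps+1)^{-1}$ gives the ratio $\eps^2 |t|^{2\eps-2}/[4(|t|^\eps+1)^2]$, which is bounded above by $\eps^2/(4|t|^2) \le \eps^2/4$ on $|t|\ge 1$. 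So (i) holds with constant $\eps^2/4$.

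For (ii) I would argue in two regimes just as in Lemma \ref{sup}. If $|x|\le 2$, then $\phi_\eps(x) \ge \phi_\eps(2) = (1+2^\eps)^{-1}$ while $\phi_\eps(x+\xi) \le 1/2$, giving an absolute constant. If $|x|\ge 2$, then $|\xi|\le 1 \le |x|/2$, so by the reverse triangle inequality $|x+\xi|\ge |x|/2$, and consequently
\[
|x+\xi|^\eps + 1 \ge 2^{-\eps}|x|^\eps + 1 \ge 2^{-\eps}(|x|^\eps + 1),
\]
which gives $\phi_\eps(x+\xi) \le 2^\eps \phi_\eps(x)$.

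I don't expect any real obstacle: both steps are elementary, and the only subtlety is that the right-hand side of the claim uses $\phi_\eps(x)$ rather than $\phi_\eps(x+\xi)$, which is handled precisely by step (ii). The constant $C_\eps$ can be taken as $(\eps^2/4)\cdot\max\{1+2^\eps,\,2^\eps\} = (\eps^2/4)(1+2^\eps)$.
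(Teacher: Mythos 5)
Your proof is correct. It reorganizes the paper's argument into a cleaner, more modular two-step chain: a self-referential bound $|(\sqrt{\phi_\eps})'(t)|^2 \leq (\eps^2/4)\,\phi_\eps(t)$ valid a.e., followed by a doubling inequality $\phi_\eps(x+\xi) \leq C'_\eps\,\phi_\eps(x)$ for $|\xi|\leq 1$. Both ingredients are present but intertwined in the paper's single case split on $|x|$: the paper's inequality $1+|x+\xi|^\eps\ge(1+|x|^\eps)/2^\eps$ for $|x|>2$ and the bound $\phi_\eps(x)\ge 1/(1+2^\eps)$ for $|x|\le 2$ are exactly your step (ii), while the chain $|x+\xi|^{2\eps-2}/(1+|x+\xi|^\eps)^3 \le 1/(1+|x+\xi|^\eps)$ is a cruder form of your step (i). Your factorization is a bit sharper and more transparent: step (i) yields the explicit constant $\eps^2/4$ by dividing through by $\phi_\eps(t)$ and using $(|t|^\eps+1)^2 \ge |t|^{2\eps}$, and in particular it does not rely on $\eps\le 2$ (which the paper invokes to get $|x+\xi|^{\eps-2}\le 1$). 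The two pieces are also individually reusable. There is no gap.
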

\begin{proof}
First we consider the case when $|x| >  2$. Using the fact that $|x+ \xi|^{\eps-2} < 1$ (which is true since $|x+\xi| > 1$ and $\eps < 2$) we obtain 
\begin{eqnarray}\label{phi_1}
\left| \left( \sqrt{\phi_\eps(x+\xi)}\right){'}\right|^2 = C_\eps \frac{|x+\xi |^{2\eps - 2}}{(1 + |x +\xi |^\eps)^3} \leq  C_\eps \frac{|x+\xi |^{\eps }}{(1 + |x +\xi |^\eps)^3}\leq  C_\eps\frac{1}{(1 + |x +\xi |^\eps)^2}  \nonumber\\  \leq  \frac{ C_\eps}{1 + |x +\xi |^\eps}.
\end{eqnarray}
Again since $|x| > 2$ and $|\xi| \leq 1$, this implies 
\begin{equation}\label{phi_2}
1 +|x+\xi |^\eps \geq 1+  \left(  |x| -1\right)^\eps \geq 1 + 
\left( \frac{|x|}{2}\right)^\eps  \geq \frac{1 + |x|^\eps}{2^\eps}.
 \end{equation}
 Combining the two equations \eqref{phi_1} and \eqref{phi_2}, we get the required inequality for $|x|>2$. \\
 
 Now let us consider the case $|x| \leq 2$. First note that if $|x+\xi | \leq 1$, then the required  inequality is a triviality as $\phi_{\eps}'(z) = 0$ on the set $ |z| \leq 1$.  So we consider the case when  $|x + \xi | >1$. Clearly in this case as $|x + \xi | \leq 3$, we have
 $$\left| \left( \sqrt{\phi_\eps(x+\xi)}\right){'}\right|^2 = C_\eps \frac{|x+\xi |^{2\eps - 2}}{(1 + |x +\xi |^\eps)^3}  \leq C_\eps |x + \xi |^{2\eps -2} \leq C_\eps\left\{
 \begin{array}{ll}
  3^{2\eps -2} \hspace{8.7mm}&\textrm{if} \   \eps \geq 1 \\
  1  \hspace{8.7mm} &\textrm{if}  \   0 < \eps  < 1. 
\end{array}
\right.$$
Lastly, as $|x|\leq 2$, it implies $\phi_{\varepsilon} \geq \frac{1}{2^{\varepsilon}+ 1}$. Therefore, for $|x| \leq 2$ and for some constant $C_{\eps} >0$, we get  
\begin{align*}
\left| \left( \sqrt{\phi_\eps(x+\xi)}\right){'}\right|^2  \leq C_{\varepsilon} \phi_{\varepsilon}(x).
\end{align*}
This  finishes the proof of the lemma.
\end{proof}\\

Now we are ready to prove the Theorem \ref{S}.\smallskip

\vspace*{0.5cm}
\noi\textbf{Proof of Theorem \ref{S}.} \\  First let us consider $S_s(\rho_{\eps,\lambda})(x)$. By Integrating in the variables  $y_2,  y_3,  .  \  ,  . \  ,   y_n $, we have 
\begin{align*}
S_s(\rho_{\eps,\lambda})(X) & = \int_{\R^n \setminus \{0\}}\left(\sqrt{\rho_{\eps,\lambda}(X+Y)} - \sqrt{\rho_{\eps,\lambda}(X)}\right)^2\frac{dY}{|Y|^{n+2s}} \nonumber\\ 
& = \int_{\R^n \setminus \{0\}}\left(\sqrt{\phi_\eps\left(\frac{x_1+y_1}{\lambda}\right)} - \sqrt{\phi_\eps\left(\frac{x_1}{\lambda}\right)}\right)^2\frac{dY}{|Y|^{n+2s}}  \nonumber\\ 
& = \Theta \int_{\R \setminus \{0\}}\left(\sqrt{\phi_\eps\left(\frac{x_1+y_1}{\lambda}\right)} - \sqrt{\phi_\eps\left(\frac{x_1}{\lambda}\right)}\right)^2\frac{dy_1}{|y_1|^{1+2s}},
\end{align*}
where $\Theta = \prod_{i=2}^n\Theta_i$ is as in Lemma \ref{connection}. Using the change of variable  $\lambda\tau = y_1 $, we get
$$S_s(\rho_{\eps,\lambda})(X) = \frac{\Theta}{\lambda^{2s}} \int_{\R \setminus \{0\}}\left(\sqrt{\phi_\eps\left(\frac{x_1}{\lambda} + \tau\right)} - \sqrt{\phi_\eps\left(\frac{x_1}{\lambda}\right)}\right)^2\frac{d\tau}{|\tau|^{1+2s}} := \frac{\Theta}{\lambda^{2s}} I_s\left(\frac{x_1}{\lambda}\right).$$
Clearly the theorem will be proved if we show that there exist a constant $C_\eps > 0,$
\begin{equation*}
\label{i}
I_s\left(x\right) \leq C_\eps \phi_\eps\left(x\right), \hspace{4mm} \forall x \in \R.
\end{equation*}
Since $\phi_\eps$ is even, using change of variable  it is  easy to show $I_s(x) = I_s(-x)$ and hence it is enough to consider the case when $x > 0$. We write $I_s(x) = I_1^s(x) + I_2^s(x)$, where 
$$I_1^s(x) =   \int_{ (-1,1) \setminus \{ 0 \}}\left(\sqrt{\phi_\eps\left(x + \tau\right)} - \sqrt{\phi_\eps\left(x\right)}\right)^2\frac{d\tau}{|\tau|^{1+2s}}$$
and 
$$I_2^s(x) =   \int_{ (-1,1)^c }\left(\sqrt{\phi_\eps\left(x + \tau\right)} - \sqrt{\phi_\eps\left(x\right)}\right)^2\frac{d\tau}{|\tau|^{1+2s}}.$$
Let us first estimate the term $I_2^s$ using Lemma \ref{sup}.
\begin{eqnarray}
\label{qq}
I_2^s(x) =   \int_{ (-1,1)^c }\left(\sqrt{\phi_\eps\left(x + \tau\right)} - \sqrt{\phi_\eps\left(x\right)}\right)^2\frac{d\tau}{|\tau|^{1+2s}} \
\leq 2 \int_{ (-1,1)^c }\left(\phi_\eps\left(x + \tau\right) + \phi_\eps\left(x\right)\right)\frac{d\tau}{|\tau|^{1+2s}}\nonumber\\
\leq  C \phi_\eps(x) + 2\int_{ (-1,1)^c }\phi_\eps\left(x + \tau\right) \frac{d\tau}{|\tau|^{1+2s}}  \leq  C \phi_\eps(x)  + C_\eps \phi_\eps(x) \int_{1}^\infty \frac{d\tau}{|\tau |^{1+2s-\eps}}. \nonumber
\end{eqnarray}
Since $\eps < 2s$, we get  
\begin{equation*}\label{i2}
I_2^s(x) \leq C_{\eps} \phi_\eps(x), \hspace{4mm} \forall x \in \R.
\end{equation*}
Now we estimate the term $I_1^s$. Using mean value theorem, there exists some $|\xi | \leq 1$ and  $x-\tau \leq x + \xi \leq x+\tau$, such that 
\begin{equation*}\label{kk}
\left|\sqrt{\phi_\eps(x+\tau)} - \sqrt{\phi_\eps(x)} \right| \leq \left| \left( \sqrt{\phi_\eps(x+\xi)}\right){'}\right|  |\tau |.
\end{equation*}
Plugging this expression in $I_1^s(x)$ and using Lemma \ref{grad} we finally get 
\begin{align*}
I_1^s(x) \leq C_{\varepsilon} \phi_{\varepsilon}(x).
\end{align*}
This completes the proof of the theorem.    \quad \hfill $\square$  \\

Now we present the proof of Theorem \ref{fd}.

\vspace*{0.5cm}

\textbf{Proof of Theorem \ref{fd}. }

By the hypothesis of the theorem, $u_{\ell}$ is the weak solution of the problem \eqref{Main Problem} where $$||f_{\ell}||_{L^2(\Omega_{\ell}\setminus \Omega_{\ell -1})} \leq K$$ for each $\ell$.
Taking $\lambda$ large enough we see by Theorem \ref{S} that the hypothesis of Proposition \ref{KAREN1} holds true with the function $\rho_{\eps, \lambda}$. Hence we obtain 
\begin{align}\label{thm Karen 1}
\int_{\Omega_{\ell}} u_{\ell}^2(X) \rho_{\eps, \lambda}(X) \ dX \leq C^1_{\gamma} \int_{\rn} f_{\ell}^2(X) \rho_{\eps, \lambda}(X) \ dX,
\end{align}
where, $C^1_{\gamma}$ is independent of $\ell$.
Now using assumption \eqref{force term condn}, inequality \eqref{thm Karen 1} and the definition of $\rho_{\eps, \lambda}$ we get
\begin{align*}
 \int_{\Omega_1} u_\ell^2 \ dX\leq C_1 \int_{\Omega_\ell} \rho_{\eps,\lambda} u_\ell^2 \ dX  \leq C_2 \int_{\Omega_\ell \setminus \Omega_{\ell-1}} \rho_{\eps,\lambda}  f_\ell^2 \ dX \leq \frac{C}{(\ell - 1)^{\eps}}, 
\end{align*}
for every $\varepsilon < 2s$. This completes the proof of the theorem.  \quad \hfill $\square$

\appendix

\section{Appendix}

As mentioned in the introduction, here we give a proof of Theorem \ref{main th} using Proposition \ref{KAREN1} with Theorem \ref{S} and we need a better regularity on the solution $u_{\infty}$ of the problem \eqref{Limiting  Problem} in order to prove this connection. The boundedness and regularity of $u_{\infty}$ follow from the next theorem obtained by Oton and Serra [see, \cite{oton}]. 

\begin{theorem} \label{regularity}
Let $\Omega$ be a bounded Lipschitz domain satisfying the exterior ball condition and $f \in L^{2}(\Omega)$. Let $u$ be the weak solution of type \eqref{Main Problem} where $f_{\ell}$ is replaced by $f$ and $\Omega_{\ell}$ is replaced by $\Omega$. 
\begin{enumerate}
\item[(i)] If $f \in L^{\infty}(\Omega)$ then $u \in C^{s}(\rn)$ and there exists $C_1>0$ such that 
 $$||u||_{C^{s}(\rn)} \leq C_1 ||f||_{L^{\infty}(\Omega)}.$$
 \item[(ii)] In addition, if  there exists $\beta > 0$ such that $\beta + 2s >2$ and $f \in C^{0,\beta}(\bar{\Omega})$, then 
 $u \in C^{2,2s+\beta-2}(\Omega)$ and there exists a constant $C_2>0$ such that 
 $$||u||_{C^{2, 2s+\beta-2}(\Omega)} \leq C_2 \left(||f||_{C^{0,\beta}(\bar{\Omega})} + ||u||_{C^{s}(\rn)}\right).$$ 
\end{enumerate}
\end{theorem}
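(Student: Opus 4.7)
The plan is to prove Part (i) by a barrier-plus-comparison argument and Part (ii) by an incremental-quotient Schauder-type bootstrap, following the strategy developed by Ros-Oton and Serra for the Dirichlet problem of the fractional Laplacian.

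For Part (i), I would proceed in three steps. First, establish the global $L^{\infty}$ bound $||u||_{L^{\infty}(\Omega)} \leq C \, ||f||_{L^{\infty}(\Omega)}$, for instance by comparing $u$ with the explicit radial solution of $(-\lap)^s V = ||f||_{L^{\infty}(\Omega)}$ in a ball containing $\Omega$. Second, construct a boundary barrier. The exterior ball condition provides, for each $x_0 \in \del\Omega$, a ball $B_r(z_0) \subset \Omega^c$ tangent to $\del\Omega$ at $x_0$; a delicate but standard computation shows that a nonnegative function behaving like $d(X,\del\Omega)^s$ near $x_0$ can be turned, after multiplication by a constant times $||f||_{L^{\infty}(\Omega)}$, into a supersolution of the Dirichlet problem in a one-sided neighborhood of $x_0$. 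Here the exponent $s$ is critical: $(-\lap)^s$ applied to $d^s$-type profiles is precisely bounded, which is why $s$ is exactly the optimal boundary regularity. Comparison with such barriers at every boundary point yields the decay $|u(X)| \leq C \, ||f||_{L^{\infty}(\Omega)} \, d(X,\del\Omega)^s$. Third, combine this boundary decay with interior H\"older estimates for $(-\lap)^s u = f$ with bounded $f$ (available via the Caffarelli--Silvestre extension or direct kernel arguments) to conclude the global bound $||u||_{C^{s}(\rn)} \leq C_1 ||f||_{L^{\infty}(\Omega)}$.

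For Part (ii), the plan is an interior Schauder-type bootstrap. Fix a point of the interior and a small ball around it, a unit vector $e$, and a nonzero increment $h$, and set
$$w_h(X) := \frac{u(X+he) - u(X)}{|h|^{\beta}}.$$
Then $(-\lap)^s w_h$ equals the corresponding H\"older incremental quotient of $f$, which is uniformly bounded in $L^{\infty}$ because $f \in C^{0,\beta}(\overline{\Omega})$. Applying the interior estimate from Part (i) to $w_h$ gives $w_h \in C^{s}_{\mathrm{loc}}$ with bounds independent of $h$; letting $h \to 0$ promotes $u$ to $C^{\beta+s}_{\mathrm{loc}}$. Iterating the same argument once more yields $u \in C^{\beta+2s}_{\mathrm{loc}}$, which under the hypothesis $\beta + 2s > 2$ is precisely the space $C^{2,\beta+2s-2}_{\mathrm{loc}}(\Omega)$, and delivers the stated estimate after inserting the bound from Part (i).

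The principal obstacle lies in Part (i): obtaining a sharp boundary barrier requires computing $(-\lap)^s$ on profiles vanishing like $d(X,\del\Omega)^s$ against a merely Lipschitz, nonsmooth boundary, exploiting the exterior ball condition to control the nonlocal kernel integrals uniformly up to $\del\Omega$. The interior Schauder iteration of Part (ii) is routine in shape, but the kernel cancellation required to cross the classical second-derivative threshold $\beta + 2s = 2$ and extract genuine $C^{2,\alpha}$ regularity from a purely nonlocal equation is the technical core of \cite{oton} and is where the precise structure of $(-\lap)^s$ is essential.
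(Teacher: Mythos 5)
The paper does not prove this statement: it is quoted verbatim as a known result of Ros-Oton and Serra \cite{oton}, introduced with the phrase ``the next theorem obtained by Oton and Serra,'' so there is no in-paper proof to compare against. What you have written is a sketch of the proof in that reference, and it is worth assessing on its own terms.

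Your outline of Part (i) is accurate: the $L^\infty$ bound by radial comparison, the $d(X,\partial\Omega)^s$ boundary barrier built from the exterior ball condition, and the patching of boundary decay with interior H\"older estimates is precisely how Ros-Oton--Serra establish $u\in C^s(\rn)$ with the stated bound.

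Part (ii) contains a genuine gap. You apply the interior counterpart of Part (i) to $w_h=(u(\cdot+he)-u)/|h|^\beta$ to get $w_h\in C^s_{\rm loc}$ uniformly, conclude $u\in C^{\beta+s}_{\rm loc}$, and then say ``iterating the same argument once more yields $u\in C^{\beta+2s}_{\rm loc}$.'' That second iteration does not close. To run it you would need the right-hand side $(-\lap)^s$ of a further incremental quotient of $u$ to be uniformly bounded, i.e. you would need $f$ to have more than $\beta$ H\"older regularity; but $f$ is only $C^{0,\beta}$, so the incremental quotient of $f$ at any exponent exceeding $\beta$ is not controlled. There is a secondary issue even at the first step: for $w_h$ to be bounded you need $u\in C^\beta$ a priori, and Part (i) only guarantees $u\in C^s$, so when $\beta>s$ (which happens for $s\le 2/3$ under $\beta+2s>2$) the quotient $w_h$ need not be bounded either. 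The standard fix is not to iterate the $L^\infty\!\to C^s$ estimate, but to prove the full interior Schauder gain of $2s$ in a single step: write $u$ locally as the Riesz potential of a truncated $f$ plus an $s$-harmonic remainder and use the mapping property $(-\lap)^{-s}:C^\beta\to C^{\beta+2s}$ (when $\beta+2s\notin\Z$), which is precisely Lemma~2.9 of Ros-Oton--Serra. Alternatively, if you insist on incremental quotients, one must proceed through a sequence of exponents that increase gradually (staying below the regularity already established for $u$) rather than a single quotient of order $\beta$ followed by a second pass.

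Finally, note that the conclusion of Part (ii) is a local interior estimate: near $\partial\Omega$ the solution behaves like $d^s$ and is certainly not $C^2$, so the norm on the left should be read as a $C^{2,\beta+2s-2}_{\rm loc}(\Omega)$ statement (or a suitably weighted norm). Your sketch implicitly treats it this way, which is correct.
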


Once we consider the above assumption on the force function $f$ to the problem \eqref{Limiting  Problem}, then the weak solution $u_{\infty}$  of \eqref{Limiting  Problem} is a classical solution and $u_{\infty} \in L^{\infty}(\R^{n-1})$. Next we state and prove the main theorem in this section. 

\begin{theorem} \label{connection th}
Let $s \in \left(\frac{1}{2},1\right)$, $f \in C^{0,\beta}(\bar{\Omega})$ for some $\beta>0$ such that $\beta + 2s >2$. Let $u_{\ell}$ be the solution of \eqref{Actual  Problem} and $u_{\infty}$ be the solution of \eqref{Limiting  Problem}. Then we have 
\begin{align*}
||u_{\ell} - u_{\infty}||_{L^2(\Omega_1)} \longrightarrow 0 \quad \mbox{as} \quad \ell \rightarrow \infty.
\end{align*}
\end{theorem}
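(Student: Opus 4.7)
The plan is to reduce Theorem \ref{connection th} to a Dirichlet problem on $\Omega_\ell$ with zero boundary data and a right-hand side that decays in the interior, and then to apply Proposition \ref{KAREN1} with the weight $\rho_{\eps,\lambda}$ produced by Theorem \ref{S}. Fix $\chi\in C_c^\infty(\R)$ with $\chi\equiv 1$ on $[-1/2,1/2]$ and $\chi\equiv 0$ outside $(-1,1)$, and set $\chi_\ell(x_1):=\chi(x_1/\ell)$ and $\tilde u_\infty(x_1,X_2):=u_\infty(X_2)\chi_\ell(x_1)$. Then $\tilde u_\infty$ vanishes on $\Omega_\ell^c$ and, by a calculation parallel to Lemma \ref{limiting lemma}, lies in $H^s_{\Omega_\ell}(\R^n)$. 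Setting $w_\ell:=u_\ell-\tilde u_\infty\in H^s_{\Omega_\ell}(\R^n)$, the weak formulation gives $(-\lap)^s w_\ell = F_\ell$ in $\Omega_\ell$ with $F_\ell := f - (-\lap)^s\tilde u_\infty$ and $w_\ell=0$ on $\Omega_\ell^c$. Since $\chi_\ell\equiv 1$ on $\Omega_1$ for $\ell\ge 2$, on $\Omega_1$ we have $w_\ell=u_\ell-u_\infty$, so the theorem reduces to showing $\int_{\Omega_\ell}w_\ell^2\rho_{\eps,\lambda}\,dX\to 0$ (as $\rho_{\eps,\lambda}\ge 1/2$ on $\Omega_1$ for $\lambda\ge 1$).

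The central step is a pointwise description of $F_\ell$, enabled by the increased regularity: Theorem \ref{regularity} with $\beta+2s>2$ gives $u_\infty\in L^\infty(\R^{n-1})\cap C^2(\omega)$, so $(-\lap)^s u_\infty^*(X)=f(X_2)$ in the classical sense on $\R\times\omega$. Using the algebraic identity
\begin{align*}
\tilde u_\infty(X)-\tilde u_\infty(X+Y) &= u_\infty(X_2)\{\chi_\ell(x_1)-\chi_\ell(x_1+y_1)\} \\
&\quad + \chi_\ell(x_1+y_1)\{u_\infty(X_2)-u_\infty(X_2+Y_2)\},
\end{align*}
integrating the $\chi_\ell(x_1+y_1)\equiv 1$ portion of the second piece in $y_1$, and invoking Lemma \ref{connection}, I will obtain $F_\ell=\mathcal{B}_2-\mathcal{A}$, where $\mathcal{A}(X):= C\,u_\infty(X_2)(-\partial^2_{x_1})^s\chi_\ell(x_1)$ and $\mathcal{B}_2(X)$ is the remainder carrying the factor $1-\chi_\ell(x_1+y_1)$. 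The scaling $\chi_\ell(x_1)=\chi(x_1/\ell)$ yields $(-\partial^2_{x_1})^s\chi_\ell(x_1)=\ell^{-2s}((-\partial^2)^s\chi)(x_1/\ell)$, and since $(-\partial^2)^s\chi\in L^\infty(\R)$, $|\mathcal{A}(X)|\le C\ell^{-2s}$ uniformly in $X$. Because $1-\chi_\ell(x_1+y_1)$ vanishes on $|x_1+y_1|\le\ell/2$, for $|x_1|\le\ell/4$ the $y_1$-support lies in $\{|y_1|\ge\ell/4\}$, giving $|\mathcal{B}_2(X)|\le C\ell^{-2s}$; for $|x_1|>\ell/4$ the regularity of $u_\infty$ tames the singularity at $Y=0$ and $\mathcal{B}_2$ is merely uniformly bounded.

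Now I apply Proposition \ref{KAREN1} to $w_\ell$. Pick $\eps\in(1,2s)$ --- possible exactly because $s>1/2$ --- and take $\lambda$ large enough that Theorem \ref{S} supplies $S_s(\rho_{\eps,\lambda})\le C_0\gamma\rho_{\eps,\lambda}$ with $\gamma<1/10$. The proposition gives
\[
\int_{\Omega_\ell} w_\ell^2\rho_{\eps,\lambda}\,dX \le C \int_{\R^n} F_\ell^2\rho_{\eps,\lambda}\,dX.
\]
Splitting the right-hand side into $\{|x_1|\le\ell/4\}$ and $\{\ell/4<|x_1|<\ell\}$ and using the two bounds on $F_\ell$ just derived together with (i) $\int_\R\rho_{\eps,\lambda}(x_1)\,dx_1<\infty$ (valid since $\eps>1$) on the interior region and (ii) $\rho_{\eps,\lambda}(x_1)\le C\lambda^\eps/|x_1|^\eps$ for $|x_1|\ge\lambda$ on the outer region, the two contributions are $O(\ell^{-4s})$ and $O(\lambda^\eps\ell^{1-\eps})$ respectively, both going to $0$ as $\ell\to\infty$.

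The main obstacle is the pointwise analysis of $F_\ell$: identifying $\mathcal{A}$ as the scaled one-dimensional fractional Laplacian of $\chi$ is what produces the uniform $\ell^{-2s}$ decay, and this identification relies on the $C^2$ interior regularity of $u_\infty$ afforded by the hypothesis $\beta+2s>2$. The standing assumption $s>1/2$ enters exactly so that $\eps\in(1,2s)$ is nonempty, which is the $x_1$-integrability threshold needed to absorb the non-decaying $L^\infty$ bound on $F_\ell$ in the transition and outer region.
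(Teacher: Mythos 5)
Your overall plan is the same as the paper's: subtract a cutoff copy of $u_\infty$ to obtain a zero--boundary-condition problem on $\Omega_\ell$, then apply Proposition~\ref{KAREN1} together with Theorem~\ref{S} and show the new right-hand side decays in the interior while remaining controlled in the annulus. Your cutoff $1-\chi_\ell$ plays the role of the paper's $\psi_\ell$ (with a wider, $\ell$-scaled transition zone rather than a unit one), and your final estimates $O(\ell^{-4s})+O(\lambda^\eps\ell^{1-\eps})$ for $\eps\in(1,2s)$ are a valid variant of the paper's.

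There is, however, a genuine gap in the estimate of $\mathcal{B}_2$ on the region $\ell/4<|x_1|<\ell$. You assert that ``the regularity of $u_\infty$ tames the singularity at $Y=0$,'' but this is not enough. For such $x_1$, the factor $1-\chi_\ell(x_1+y_1)$ is typically of order one near $Y=0$, so it does not suppress the singularity. The global regularity of $u_\infty$ is only $C^{s}(\rn)$ (Theorem~\ref{regularity}(i)), so $|u_\infty(X_2)-u_\infty(X_2+Y_2)|=O(|Y_2|^s)$, and $\int_{|Y|<1}|Y_2|^s|Y|^{-n-2s}\,dY$ diverges. The interior $C^{2}$ bound from part (ii) of Theorem~\ref{regularity} is not uniform up to $\partial\omega$ (generically $|D^2u_\infty|$ blows up like $\mathrm{dist}(X_2,\partial\omega)^{s-2}$), so it cannot give a uniform pointwise bound on the annulus either. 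The correct fix is to further split $1-\chi_\ell(x_1+y_1)=\bigl(1-\chi_\ell(x_1)\bigr)+\bigl(\chi_\ell(x_1)-\chi_\ell(x_1+y_1)\bigr)$: the first piece factors out of the $Y$-integral, whose $Y_2$-integration produces $C\,(-\lap')^s u_\infty(X_2)=C\,f(X_2)$, bounded since $f\in C^{0,\beta}(\bar\Omega)$; the second piece contributes an extra factor $O(|y_1|/\ell)$ near $Y=0$, which combined with $|Y_2|^s$ gives an integrable $|Y|^{1+s-n-2s}$ near the origin. This is precisely the mechanism the paper's $I_1,I_2,I_3$ decomposition supplies --- the bound there comes from the equation $(-\lap')^s u_\infty=f$ and the Taylor estimate of the cutoff, not from the regularity of $u_\infty$. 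As written, your argument skips the key step; with that decomposition inserted, the proof goes through.
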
 

\begin{proof}
We need a very important auxiliary function in order to prove the theorem. Let $\psi_\ell: \R \rightarrow \R$ be a smooth bounded function on $\R$ such that 
\begin{equation*}\label{Main Problem 5}
 \psi_\ell = \left\{
 \begin{array}{ll}
  1 \hspace{8.7mm}&\textrm{in} \  (-\ell,  \ell)^c \\
  0    \hspace{8.7mm} &\textrm{on}  \  (-\ell +1, \ell-1). 
\end{array}
\right.
\end{equation*} 
We define 
\begin{align} \label{transformation}
w_\ell : = u_\ell - u_\infty + \psi_\ell u_\infty.
\end{align}

By the assumption on $f$ and $\psi_{\ell}$, using Theorem \ref{regularity} we observe that $(-\lap)^s u_{\infty} \psi_{\ell}$ can be evaluated point-wise in $\Omega_{\ell}$. First, we will show that $(-\lap)^s u_{\infty} \psi_{\ell} \in L^2(\Omega_{\ell})$. By the definition of fractional Laplacian we see
\begin{align*}
& (-\lap)^s u_{\infty}(X_2) \psi_{\ell}(x_1)\\ 
= & \frac{C_{n,s}}{2}\int_{\rn} \frac{2u_{\infty}(X_2)\psi_{\ell}(x_1)-u_{\infty}(X_2+Y_2)\psi_{\ell}(x_1+y_1) - u_{\infty}(X_2-Y_2)\psi_{\ell}(x_1-y_1) }{|Y|^{n+2s}} \ dY \\
= & \frac{C_{n,s}}{2}\psi_{\ell}(x_1)\int_{\rn} \frac{2u_{\infty}(X_2) - u_{\infty}(X_2+Y_2) - u_{\infty}(X_2-Y_2)}{|Y|^{n+2s}} \ dY \\
 & \quad + \frac{C_{n,s}}{2} \left[ \int_{B(0,1)} \frac{ u_{\infty}(X_2+Y_2)\{ \psi_{\ell}(x_1) - \psi_{\ell}(x_1+y_1) + y_1 \psi_{\ell}'(x_1)\}}{|Y|^{n+2s}} \ dY \right. \\
 & \hspace*{5cm} +\left. \int_{B(0,1)^c} \frac{u_{\infty}(X_2+Y_2) \{ \psi_{\ell}(x_1) - \psi_{\ell}(x_1+y_1) \}}{|Y|^{n+2s}} \ dY\right]\\
 \end{align*}
\begin{align*} 
 &\quad  + \frac{C_{n,s}}{2} \left[ \int_{B(0,1)} \frac{u_{\infty}(X_2-Y_2)\{\psi_{\ell}(x_1) - \psi_{\ell}(x_1-y_1) - y_1 \psi_{ \ell}'(x_1) \}}{|Y|^{n+2s}} \ dY  \right. \\
 & \hspace*{5cm}+\left.  \int_{B(0,1)^c} \frac{u_{\infty}(X_2-Y_2)\{ \psi_{\ell}(x_1) - \psi_{\ell}(x_1-y_1) \} }{|Y|^{n+2s}} \ dY\right] \\
=& \quad I_1 + I_2 + I_3,
\end{align*}
where, $I_1, I_2, I_3$ are the first, second and third integral respectively in the above expression. In the previous calculation we have used the fact that $y_1 \psi'_{\ell}(x_1) $ is an odd function of $y_1$ for each $Y_2$.  Now for $y_1 \in (-1,1)$ using the estimate $$ |\psi_{\ell}(x_1) - \psi_{\ell}(x_1\pm y_1) \pm \psi_{\ell}'(x_1)y_1| \leq y_1^2 ||\psi_{\ell}''||_{L^{\infty}(\R)}  $$ and the boundedness of $\psi_{\ell}$, we get 
\begin{align} \label{appendix1}
|I_2| + |I_3| \leq K ||u_{\infty}||_{L^{\infty}(\R^{n-1})} \left(  ||\psi_{\ell}||_{L^{\infty}(\R)} + ||\psi_{\ell}''||_{L^{\infty}(\R)} \right).
\end{align}
Next, we consider $I_1$ and as we did earlier in our article, integrating with respect to $y_1$ variable, we get $$|I_1| \leq C \ |\psi_{\ell}(x_1)| |(-\lap')^s u_{\infty}(X_2)| = C|\psi_{\ell}(x_1)| |f(X_2)|,$$ hence, combining this with \eqref{appendix1} we finally get for each $(x_1,X_2)\in \Omega_{\ell}$ 
\begin{align}\label{appendix2}
& |(-\lap)^s u_{\infty}(X_2) \psi_{\ell}(x_1)| \notag\\
 \leq & C ||\psi_{\ell}||_{L^{\infty}(\R)} \ |f(X_2)| + K ||u_{\infty}||_{L^{\infty}(\R^{n-1})} \left(  ||\psi_{\ell}||_{L^{\infty}(\R)} + ||\psi_{\ell}''||_{L^{\infty}(\R)} \right).
\end{align}
The inequality \eqref{appendix2} assures us  $(-\lap)^s u_{\infty} \psi_{\ell} \in L^2(\Omega_{\ell})$. Hence, from the definition of $u_{\ell}$ and $u_{\infty}$ we see that $v_{\ell}:= u_{\ell} - u_{\infty}$ satisfies the following equation : 
\begin{equation}\label{Main Problem 10}
  \left\{
 \begin{array}{ll}
 
\left(-\lap \right)^s v_\ell (X)= \left(-\lap\right )^s (\psi_\ell(x_1) u_\infty(X_2)) := \Psi_\ell(X)\hspace{8.7mm}&\textrm{in} \ \Omega_\ell , \\
  v_\ell = 0   \hspace{16.9mm} &\textrm{on}  \ \Omega_\ell^c,
\end{array}
\right.
\end{equation}
where, $ \Psi_\ell \in L^2(\Omega_{\ell})$.\smallskip

Now applying Proposition \ref{KAREN1} to the equation \eqref{Main Problem 10} with the family of functions $ \rho_{\eps,\lambda}$ for sufficiently large $\lambda$,   we get for any $\eps <2s$,
\begin{multline}\label{kkl}
\frac{1}{2}\int_{\Omega_1}(u_\ell - u_\infty)^2 = \int_{\Omega_1} \rho_{\eps,\lambda} v_\ell^2 \leq \int_{\Omega_\ell} \rho_{\eps,\lambda}v_\ell^2 \leq \int_{\Omega_\ell} \rho_{\eps,\lambda}\Psi_\ell^2 \\
 \leq \int_{\Omega_{\frac{\ell}{2}}} \rho_{\eps,\lambda} \Psi_\ell^2 + \int_{\Omega_\ell\setminus\Omega_{\frac{\ell}{2}}} \rho_{\eps,\lambda}\Psi_\ell^2  \leq 
 \int_{\Omega_{\frac{\ell}{2}}} \rho_{\eps,\lambda}\Psi_\ell^2 + \frac{\lambda^\eps}{\lambda^\eps + (\frac{\ell}{2})^\eps}\int_{\Omega_\ell\setminus\Omega_{\frac{\ell}{2}}}\Psi_\ell^2 . 
\end{multline}
First we estimate the function $\Psi_\ell$ on the set $\Omega_{\frac{\ell}{2}}$ point-wise. By definition
\begin{align*}
\Psi_\ell(X) & := \left( -\lap \right)^s(u_\infty(X_2)\psi_\ell(x_1)) \\ 
& = \frac{1}{2}\int_{\R^n} \frac{2 u_\infty(X_2)\psi_\ell(x_1)- u_\infty(X_2+ Y_2)\psi_\ell(x_1 +y_1) - u_\infty(X_2- Y_2)\psi_\ell(x_1 -y_1) }{|Y|^{n+2s}} dY\\
&= \frac{1}{2}\int_{\R^n} \frac{- u_\infty(X_2+ Y_2)\psi_\ell(x_1 +y_1) - u_\infty(X_2- Y_2)\psi_\ell(x_1 -y_1) }{|Y|^{n+2s}} dY .
\end{align*}
Clearly as $\omega$ is bounded we can find a ball $B_R$ of radius $R$ in $\R^{n-1}$ such that $\omega \subset B_{R}$. Hence, as $X \in \Omega_{\frac{\ell}{2}}$, we see that $$u_{\infty}(X_2\pm Y_2)\psi(x_1\pm y_1)=0 \quad \mbox{where} \quad Y \in \left(\left(-\frac{\ell}{2} +1, \frac{\ell}{2} -1\right)^c \times B_{2R} \right)^c.$$ 
Using this fact we get 
\begin{align}\label{kkl2}
&|\Psi_\ell(X) |  \leq 2 ||u_\infty ||_{L^{\infty}(\R^{n-1})}||\psi_{\ell}||_{L^{\infty}(\R)} \int_{\left(-\frac{\ell}{2} +1, \frac{\ell}{2} -1\right)^c \times B_{2R}}\frac{dY}{|Y|^{n+2s}} \notag\\
&\leq 2 ||u_\infty ||_{L^{\infty}(\R^{n-1})}||\psi_{\ell}||_{L^{\infty}(\R)} |B_{2R}| \int_{\left(-\frac{\ell}{2} +1, \frac{\ell}{2} -1\right)^c}\frac{dy_1}{|y_1|^{n+2s}} \leq
\frac{C_1 ||u_\infty ||_{L^{\infty}(\R^{n-1})}||\psi_{\ell}||_{L^{\infty}(\R)}}{\ell^{n-1+2s} }.
\end{align}
Using the estimate \eqref{kkl2} we see there exists a constant $C_2>0$ depending on the $L^{\infty}$ norm of $u_{\infty}$ and $\psi_{\ell}$ such that 
\begin{align} \label{appendix3}
\int_{\Omega_{\frac{\ell}{2}}} \Psi_{\ell}^2(X) \ dX \leq  \frac{C_2}{\ell^{2n-3+4s}}.
\end{align}
Finally, using the inequality \eqref{appendix3} and the fact $||\Psi_{\ell}||^2_{L^2(\Omega_{\ell})} \leq C \ell $ to the expression \eqref{kkl}, we obtain  
 $$\int_{\Omega_1} (u_\ell(X)-u_\infty(X))^2 dX \leq  \frac{C_2}{\ell^{n-2+2s} } +  \frac{C}{\ell^{\eps-1}}.$$
Since $\eps$ is any arbitrary number less than $2s$ and $s>\frac{1}{2}$, we take $\eps$ as $1<\eps <2s $. Now this implies 
that, as $\ell \rightarrow \infty$,
$$\int_{\Omega_1} (u_\ell-u_\infty)^2  \longrightarrow 0.$$
This completes the proof.
\end{proof}

\bigskip

\noindent\textbf{Acknowledgments.}  The research work of the second author is supported by "Innovation in Science Pursuit for Inspired Research (INSPIRE)" under the IVR Number: 20140000099.


\begin{thebibliography}{200}
\small{





\bibitem{kassman} M. Felsinger, M. Kassmann, and P. Voigt; The Dirichlet problem for nonlocal operators. \textit{ Math. Z.} 279 (2015), no. 3-4, 779--809.
\vspace{-.25cm}

\bibitem{b}
M. Chipot; $\ell$ goes to plus infinity, Birkh\"auser, 2002. 
\vspace{-.25cm}


\bibitem{CR}
M. Chipot  and  A. Rougirel; On the asymptotic behaviour of the solution of elliptic problems in cylindrical domains becoming unbounded. \textit{Commun. Contemp. Math.} 4 (2002), no. 1, 15-44. 
\vspace{-.25cm}

\bibitem{d} 
M. Chipot  and A. Rougirel;  Remarks on the asymptotic behaviour of the solution to parabolic problems in domains becoming unbounded, \textit{ Nonlinear Analysis }
47, p. 3--12, 2001.
\vspace{-.25cm}

\bibitem{pro} M. Chipot, A. Mojsic and P. Roy;  On some variational problem set on domains tending to infinity, \textit{to appear}.
\vspace{-.25cm}

\bibitem{pr} M. Chipot, P. Roy and  I. Shafrir;   Asymptotics of eigenstates of elliptic problems with mixed boundary data on domains tending to infinity,  \textit{Asymptotic Analysis,} 85, no. 3-4, 199--227, 2013.
 \vspace{-.25cm}
 
 \bibitem{CS1}
M. Chipot and S. Mardare;  Asymptotic behaviour of the Stokes problem in cylinders becoming unbounded in one direction. \textit{ J. Math. Pures Appl.} (9) 90 (2008), no. 2, 133-159.
 \vspace{-.25cm}
 
\bibitem{CS2}
M. Chipot  and S. Mardare;  The Neumann problem in cylinders becoming unbounded in one direction. \textit{J. Math. Pures Appl.} 2015.
 \vspace{-.25cm}
  
 \bibitem{ka}
 M. Chipot  and  K. Yeressian;  On the asymptotic behavior of variational inequalities set in cylinders. \textit{ Discrete Contin. Dyn. Syst.} 33 , no. 11--12, 2013.
 \vspace{-.25cm}
 
\bibitem{karen}  M. Chipot and  K. Yeressian;   Exponential rates of convergence by an iteration technique. \textit{ C. R. Math. Acad. Sci. }Paris 346, no. 1-2, 21--26, 2008.
\vspace{-.25cm}



\bibitem{NGV}
D. Nezza, E. Palatucci and G. Valdinoci Enrico; Hitchhiker's guide to the fractional Sobolev spaces. \textit{ Bull. Sci. Math. }136 (2012), no. 5, 521-573.
\vspace{-.25cm}

\bibitem{oton}
X. Ros-Oton and J. Serra;  The Dirichlet problem for the fractional Laplacian: regularity up to the boundary. \textit{J. Math. Pures Appl.} (9) 101 (2014), no. 3, 275-302.
\vspace{-.25cm}

\bibitem{Sen1}
S. Guesmia; Some results on the asymptotic behavior for hyperbolic problems in cylindrical domains becoming unbounded, \textit{J. Math. Anal. Appl.} 341 (2008), no. 2, 1190-2012.
\vspace{-.25cm}

\bibitem{Sen2}
S. Guesmia; Some convergence results on quasi-linear parabolic boundary value problems in cylindrical domains of large size, \textit{Nonlinear Anal.} 70 (2009), no. 9, 3320-3331.
\vspace{-.25cm}

\bibitem{Karen} K. Yeressian;  Asymptotic behavior of elliptic nonlocal equations set in cylinders, \textit{ Asymptot. Anal.} 89 (2014), no. 1-2, 21--35.
\vspace{-.25cm}





}
\end{thebibliography}
\end{document}